\newcommand{\subfiguretitle}[1]{{\scriptsize{#1}} \\[1mm]}
\newcommand{\R}{\mathbb{R}}                                     
\providecommand{\abs}[1]{\left\lvert #1 \right\rvert}           
\newcommand\xqed[1]{\leavevmode\unskip\penalty9999 \hbox{}\nobreak\hfill \quad\hbox{#1}}
\newcommand{\exampleSymbol}{\xqed{$\triangle$}}
\newcommand{\algrule}[1][0.4pt]{\par\vskip.3\baselineskip\hrule height #1\par\vskip.3\baselineskip}
\definecolor{YellowOrange}{RGB}{250,162,26}
\definecolor{Gray}{RGB}{148,150,152}
\DeclareMathAlphabet{\mathpzc}{OT1}{pzc}{m}{it}
\newtheorem{theorem}{Theorem}[section]
\newtheorem{lemma}[theorem]{Lemma}
\newtheorem{definition}[theorem]{Definition}
\theoremstyle{definition}
\newtheorem{example}[theorem]{Example}
\newtheorem{remark}[theorem]{Remark}
\providecommand{\keywords}[1]{\begin{addmargin}[2.5em]{2.5em}\noindent\textbf{Keywords:} #1\\\end{addmargin}}
\providecommand{\msc}[1]{\begin{addmargin}[2.5em]{2.5em}\noindent\textbf{2010 MSC:} #1\\\end{addmargin}}
\renewcommand*\env@matrix[1][*\c@MaxMatrixCols c]{%
  \hskip -\arraycolsep
  \let\@ifnextchar\new@ifnextchar
  \array{#1}}
\newcommand{\mat}[3]{#1 \left| \begin{array}{@{}l@{}} \scriptstyle #3 \\[0cm] \scriptstyle #2 \end{array}  \right.}
\title{Multidimensional approximation of \\ nonlinear dynamical systems}
\author[1]{Patrick Gel\ss}
\author[1]{Stefan Klus}
\author[2]{Jens Eisert}
\author[1,3]{Christof Sch\"utte}
\affil[1]{\small Department of Mathematics and Computer Science, Freie Universit\"at Berlin, Germany}
\affil[2]{\small Dahlem Center for Complex Quantum Systems, Freie Universit\"at Berlin, Germany}
\affil[3]{\small Zuse Institute Berlin, Germany}
\date{}
\begin{document}
\maketitle


\begin{abstract}
A key task in the field of modeling and analyzing nonlinear dynamical systems is the recovery of unknown governing equations from measurement data only. There is a wide range of application areas for this important instance of system identification, ranging from industrial engineering and acoustic signal processing to stock market models. In order to find appropriate representations of underlying dynamical systems, various data-driven methods have been proposed by different communities. However, if the given data sets are high-dimensional, then these methods typically suffer from the curse of dimensionality. To significantly reduce the computational costs and storage consumption, we propose the method \emph{MANDy} which combines data-driven methods with tensor network decompositions. The efficiency of the introduced approach will be illustrated with the aid of several high-dimensional nonlinear dynamical systems.
\end{abstract}

\keywords{Nonlinear dynamics, system identification, data-driven methods, tensor networks, tensor-train format}

\msc{15A69, 34C15, 62J05, 65E05, 93B30}

\section{Introduction}

The identification of governing equations from data plays an increasingly important role in the field of nonlinear dynamical systems. After all, in many practically relevant situations, the actual underlying model that captures a dynamical system is unknown, while the only information available is data arising from the natural time evolution. In this instance of system identification, given time-dependent measurement data, the task at hand is to reconstruct the underlying system, merely assuming that the governing equations can be accurately approximated by linear combinations of preselected basis functions. We consider autonomous dynamical systems given by ordinary differential equations (ODEs). Many physical --~even chaotic~-- systems have a simple ODE structure comprising only a small number of terms, e.g., monomials or trigonometric functions and combinations thereof. That is, if we choose a large set of basis functions, then a system may be described in terms of only a few of these basis functions, or -- in other words -- the governing equations are sparse in the high-dimensional space of selected basis functions, cf.~\cite{SPROTT1994, SOCOLAR2006, TESCHL2012}. Essentially, we are interested in the relationship between two data sets, namely measurements of $X(t)$ and measurements of $\dot{X}(t)$ at certain time points $t$. The methods we will discuss below (as well as the approach we propose), however, can be applied to a wider range of problems. For instance, we could also consider discrete dynamical systems or stochastic differential equations (SDEs). The so-called SINDy approach \cite{BRUNTON2016} has also already been extended to identify the governing equations of SDEs, see~\cite{BONINSEGNA2018}.

The challenge here is to recover the dynamics (i.e., to recover $F$ such that $\dot{X}(t) = F(X(t))$) from a rather small number of measurements of the time-dependent states $X(t)$ and the corresponding derivatives. This can be accomplished by using different approaches, see, for example, \cite{SJOBERG1995}. One of them is \emph{symbolic regression} \cite{KOZA1994}, where evolutionary algorithms reveal appropriate models that fit the given measurement data by combining mathematical expressions. Unfortunately, symbolic regression is in general too expensive and the convergence might be slow for high-dimensional data sets because discovering the governing equations from measurement data in a high-dimensional space $\mathbb{R}^d$ can be prohibitively expensive in terms of memory consumption and computational costs. This phenomenon is often referred to as the \emph{``curse of dimensionality''}. Also neural networks can be used to approximate the governing equations of dynamical systems (and, in fact, to approximate any multidimensional function), see~\cite{MUZHOU2011}. As for the symbolic regression, knowledge about the governing equations is not needed a priori for these purely data-driven methods. However, the approximation based on neural networks does not necessarily lead to interpretable representations of the governing equations since the given data is only interpolated by using artificial activation functions, e.g., radial basis functions. 

In~\cite{BRUNTON2016}, Brunton et al.\ proposed an algorithm for the \emph{sparse identification of nonlinear dynamical systems} (SINDy). Given data measurements of states and corresponding derivatives, a certain set of basis functions has to be chosen a priori. Based on sequential least-squares approximations and an iterative selection of relevant basis functions -- which is comparable to \emph{iterative hard thresholding} \cite{BLUMENSATH2012} -- the application of SINDy often results in a sparse representation of the underlying system, provided that we choose suitable basis functions. We could also add an $L^1$-regularization term to the regression problem if sparsity of the solution should be enforced. In the community of compressed sensing, this method is known as the \emph{least absolute shrinkage and selection operator} (LASSO), see~\cite{TIBSHIRANI1996}. However, for large data sets (particularly for large dimensions $d$) the above methods still suffer from the curse of dimensionality. Thus, there is a need for efficient methods that are able to recover the governing equations of high-dimensional nonlinear dynamical systems.

In this work, we combine the data-driven discovery of the underlying dynamics with the computation of pseudoinverses of tensor-network decompositions. Tensors are generalizations of vectors and matrices represented by multidimensional arrays with multiple indices. The interest in low-rank tensor decompositions has been growing rapidly over the last years since several tensor formats such as the canonical format \cite{CARROLL1970, HARSHMAN1970}, the Tucker format \cite{TUCKER1963,TUCKER1964}, and the hierarchical Tucker format \cite{HACKBUSCH2009, ARNOLD2013} have shown that it is possible to mitigate the curse of dimensionality for many high-dimensional problems. Tensor-based methods have been successfully applied to many different application areas, e.g., quantum physics \cite{WHITE1992, MEYER2009,Orus-AnnPhys-2014, VerstraeteBig, EisertTensors}, chemical reaction dynamics \cite{JAHNKE2008, GELSS2016}, machine learning \cite{BEYLKIN2009,NOVIKOV2015}, and high-dimensional data analysis \cite{KLUS2016, KLUS2018}. 

It turned out that one of the most promising tensor formats is the so-called tensor-train format (TT format) \cite{OSELEDETS2009a, OSELEDETS2009b, OSELEDETS2011} or, equivalently, the matrix product state (MPS) format \cite{MPSSurvey,MPSRev}. It is a special case of the hierarchical Tucker format and combines the advantages of the canonical format and the Tucker format, i.e., the storage consumption of a tensor train does not depend exponentially on the number of dimensions and there exist robust algorithms for the computation of best approximations. The fact that the TT format has emerged independently in several fields of science signifies its importance. In quantum physics, the MPS representation, on the one hand, dates back to work on the density-matrix renormalization group \cite{WHITE1992}. On the other hand, pure finitely correlated states \cite{raey} are again basically quantum states in the TT format, originating in mathematical physics in efforts to generalize notions of hidden Markov models. Within numerical mathematics, the use of tensor network decompositions and approximations is rather new, which has led to an exciting development. However, many questions are still open, specifically when it comes to the convergence when approximating solutions of systems of linear equations given in the TT format, cf.~\cite{HOLTZ2012}. For an overview of different low-rank tensor approximation approaches, we refer to~\cite{GRASEDYCK2013,GELSS2017b,Orus-AnnPhys-2014}. 

The new role of tensor decompositions for data analysis techniques and especially compressed sensing was already discussed in, e.g.,~\cite{KLUS2018, RAUHUT2015, CICHOCKI2015}. Using the TT format we shift the focus from \emph{sparse} structures to \emph{low-rank} or \emph{data-sparse structures}. That is, instead of determining sparse vectors or matrices, we aim at recovering dynamical systems by utilizing tensor decompositions with a moderate number of core elements. Applying the approach proposed in~\cite{KLUS2018} to construct pseudoinverses in the TT format, we can extend the method to identify governing equations so that low-rank representations of the data can be utilized to reduce the computational complexity as well as the memory requirements. Referring to SINDy, we call our method MANDy, which is short for \emph{multidimensional approximation of nonlinear dynamical systems}.

Data-driven discovery of dynamical systems will continue to play an increasingly important role. Here, we will show that rewriting mathematical methods by exploiting low-rank tensor decompositions enables the reconstruction of high-dimensional systems which cannot be analyzed by conventional methods. The main contributions of this paper are:
\begin{itemize}[leftmargin=*,itemsep=0ex]
\item combination of data-driven methods with tensor decomposition techniques for the recovery of governing equations, reducing computational costs as well as storage consumption,
\item extension of the pseudoinverse computation in the TT format (developed for \emph{tensor-based dynamic mode decomposition} in~\cite{KLUS2018}),
\item derivation of different basis decompositions in the TT format,
\item demonstration on realistic problems from physics and engineering.
\end{itemize}

This work is organized as follows: In Section \ref{sec: notation}, we introduce SINDy and give a brief overview of the TT format and pseudoinverses in the TT format. In Section \ref{sec: tensor-based SINDy}, we derive the tensor-based reformulation of SINDy. Numerical results are presented in Section \ref{sec: results}. Section \ref{sec: conclusion} concludes with a brief summary and a future outlook.

\section{Notation and preliminaries}\label{sec: notation}

In this section, we will introduce an approach called SINDy to identify ordinary differential equations from data, the tensor-train format, and an algorithm to compute pseudoinverses in the tensor-train format. By combining these techniques, we are able to discover the governing equations of high-dimensional dynamical systems.

\subsection{Sparse identification of nonlinear dynamics}
\label{sec: recovery}

We will consider autonomous ordinary differential equations of the form $\dot{X}(t) = F(X(t))$, where $X(t) \in \mathbb{R}^d$ is the state of the system at time $t$  and $F \colon \mathbb{R}^d \rightarrow \mathbb{R}^d$ is a function. Assuming we have $ m $ measurements of the state of the system, given by $ X_k $, $ k = 1,\dots, m $, and the corresponding time derivatives, given by $ Y_k = \dot{X}_k $, the goal is to reconstruct the function $ F $. To this end, we choose a set of basis functions (also called dictionary) $ \mathbb{D} = \{\psi_1, \psi_2, \dots, \psi_p\} $, with $ \psi_j \colon \mathbb{R}^d \rightarrow \mathbb{R} $, and define the vector-valued function $ \Psi \colon \R^d \to \R^p $ by
\begin{equation*}
    \Psi(X) = \begin{bmatrix} \psi_1(X) & \psi_2(X) & \dots & \psi_p(X) \end{bmatrix}^T.
\end{equation*}
The transformed data matrix $ \Psi(\mathcal{X}) $ is then defined by
\begin{equation}\label{eq: basis matrix}
    \Psi(\mathcal{X}) =
    \begin{bmatrix}
        \Psi(X_1) & \dots & \Psi(X_m)
    \end{bmatrix}
    \in \R^{p \times m}.
\end{equation}
We now wish to determine the coefficient matrix
\begin{equation*}
    \Xi = \begin{bmatrix} \xi_1 & \xi_2 & \dots & \xi_d \end{bmatrix} \in \R^{p \times d}
\end{equation*}
such that the cost function $ \big\lVert \mathcal{Y} - \Xi^T \Psi(\mathcal{X}) \big\rVert_F $ is minimized. Each column vector $ \xi_i $ of $ \Xi $ then corresponds to one function $ F_i $ via
\begin{equation*}
    \left(Y_k\right)_i = F_i(X_k) = \xi_i^T \Psi(X_k),
\end{equation*}
where the entries of $ \xi_i $ determine which basis functions are used for the reconstruction. Thus, we obtain a model of the form $ \dot{X}(t) = \Xi^T \Psi(X(t)) $. One approach to compute an optimal coefficient matrix in the least-squares sense is the use of the pseudoinverse of $\Psi(\mathcal{X})$. That is, we determine $\Xi$ by $ \Xi^T = \mathcal{Y} \cdot (\Psi(\mathcal{X}))^+ $, where $ ^+ $ denotes the pseudoinverse. Additionally, SINDy aims at finding a sparse coefficient matrix $ \Xi $ \cite{BRUNTON2016}. This is accomplished by iteratively removing basis functions corresponding to small entries that might not be required for the reconstruction. Provided that the governing equations can be expressed in terms of the basis functions, SINDy may completely recover the dynamical system. The iterative elimination of basis functions could be omitted if the number of snapshots is large enough so that the solution of the initial least-squares problem is already close to the true solution. Alternatively, a LASSO-based (see, e.g.,~\cite{TIBSHIRANI1996}) approach can be used to identify the coefficients.

If the time derivatives are not available and need to be approximated by finite differences, the resulting $ \dot{X} $ data might be noisy and necessitate denoising techniques. Also measurement data will in general contain noise and require regularization. For more details, see~\cite{BRUNTON2016} and references therein.

\begin{example}[Illustration of SINDy] \label{ex:SINDy examples}
To illustrate the sparse identification process, let us begin with a simple example. Consider Chua's circuit, see, e.g.,~\cite{KILIC2010}, given by
\begin{equation}\label{eq: chua ODE}
    \begin{split}
        \dot{x}_1 &= \alpha (x_2 - x_1 - g(x_1)), \\
        \dot{x}_2 &= x_1 - x_2 + x_3, \\
        \dot{x}_3 &= -\beta x_2,
    \end{split}
\end{equation}
where $ \alpha $ and $ \beta $ are real parameters and $ g \colon \mathbb{R} \to \mathbb{R} $ is a nonlinear function. We will set $ \alpha = 10 $, $ \beta = 14.87 $, and $ g(z) = \delta_1 \, z + \delta_2 \, z \abs{z} $, with $ \delta_1 = -\frac{8}{7} $ and $ \delta_2 = \frac{4}{63} $. Let us try to recover the governing equations of Chua's circuit from data. We simulate the system for $ t = 0, \dots, 20 $ with a step size of $ h = 0.01 $ and the initial condition $X_1 = \left[ -1.13 , 0.004, 0.45\right]^T$, thus $ \mathcal{X}, \mathcal{Y} \in \mathbb{R}^{3 \times 2000} $. Here, we use the exact derivatives.
\begin{enumerate}[label=(\roman*), leftmargin=0.0em, itemindent=1.5em]
\item Using a basis comprising monomials of order up to and including two in each dimension, i.e.,
\begin{equation}\label{chua: first try}
    \Psi_1(X) =
    \begin{bmatrix}
        \; 1 & x_1 & x_1^2 & x_2 & x_1 x_2 & x_1^2 x_2 & \dots & x_2^2 x_3^2 & x_1 x_2^2 x_3^2 & x_1^2 x_2^2 x_3^2 \;
    \end{bmatrix}^T,
\end{equation}
we obtain the following coefficient matrix
\begin{equation*}
    \Xi_1^T = \hspace*{-0.7em}
    \kbordermatrix{
        & 1     & x_1  & x_1^2 & x_2 & x_1 x_2 & x_1^2 x_2 & x_2^2 & x_1 x_2^2 & x_1^2 x_2^2 & x_3 & \dots \\
        & -0.06 & 1.10 & 0.35 &   9.61 & -1.77 &  -1.97 &  1.05 & 3.85 &  1.07 & -0.07 & \dots \\
        &  0    & 1    & 0    &  -1    &  0    &   0    &  0    & 0    &  0    &  1    & \dots \\
        &  0    & 0    & 0    & -14.87 &  0    &   0    &  0    & 0    &  0    &  0    & \dots \\
    }.
\end{equation*}
The coefficients for the second and third function are identified correctly, whereas the first equation cannot be represented by the basis functions and all coefficients are unequal to zero, approximating the missing basis function. The resulting trajectories are shown in Figure~\ref{fig:Chua SINDy} (a).
\item If we use a different set of basis functions, given by
\begin{equation}\label{chua: second try}
    \Psi_2(X) =
    \begin{bmatrix}
        \; 1 & x_1 & x_2 & x_3 & \abs{x_1} & x_1 \abs{x_1} & x_2 \abs{x_1} & x_3 \abs{x_1} & \dots & x_3 \abs{x_3} & x_3 \abs{x_3} \;
    \end{bmatrix}^T
\end{equation}
the system is recovered correctly as
\begin{equation*}
    \Xi_2^T = \hspace*{-0.7em}
    \kbordermatrix{
        & 1 & x_1 & x_2 & x_3 & \abs{x_1} & x_1 \abs{x_1} \\
        & 0 &   1.42 &  10    & 0 & 0 & -0.6349 & \dots \\
        & 0 &   1    & -1     & 1 & 0 &  0      & \dots \\
        & 0 &   0    & -14.87 & 0 & 0 &  0      & \dots
    }.
\end{equation*}
All remaining coefficients are numerically zero. The simulation results are shown in Figure~\ref{fig:Chua SINDy}~(b). \exampleSymbol
\end{enumerate}

\begin{figure}[htb]
    \centering
    \begin{minipage}{0.49\textwidth}
        \centering
        \subfiguretitle{(a)}
        \includegraphics[width=\textwidth]{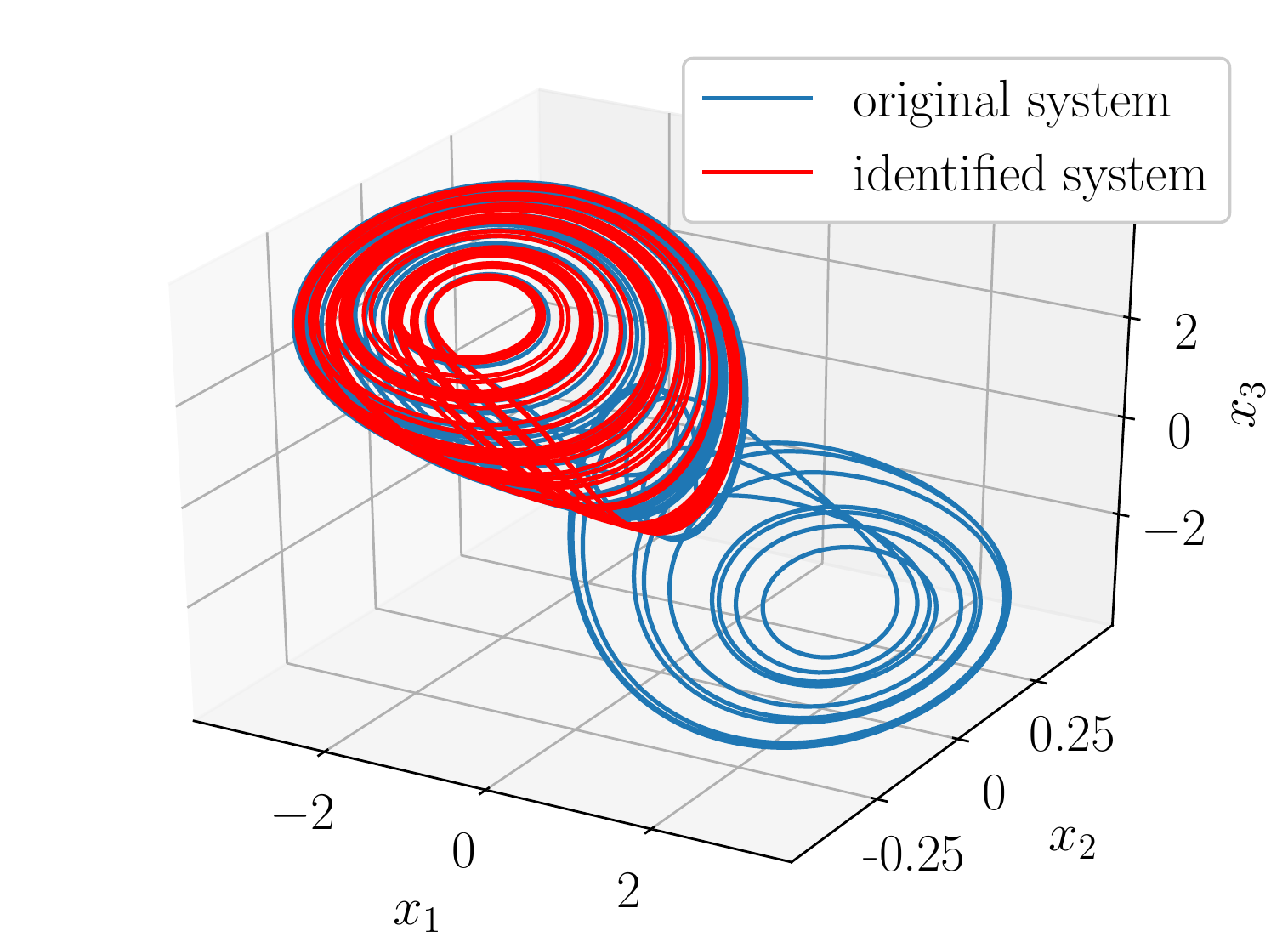}
    \end{minipage}
    \begin{minipage}{0.49\textwidth}
        \centering
        \subfiguretitle{(b)}
        \includegraphics[width=\textwidth]{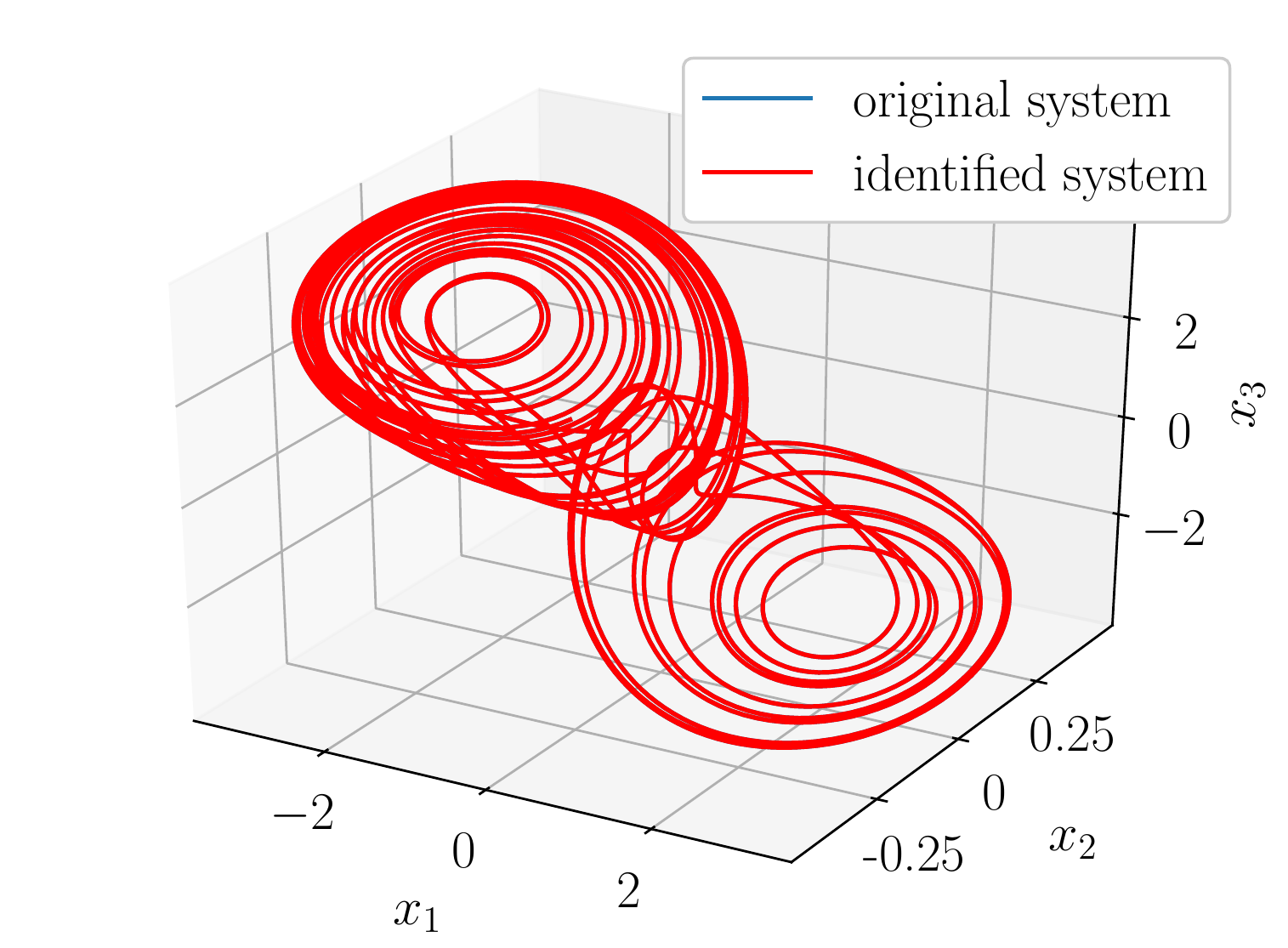}
    \end{minipage}
    \caption{Simulation of Chua's circuit and the identified systems for $ t = 0, \dots, 100 $ with a step size of $ h = 0.01 $. (a) Using a basis comprising only monomials leads to incorrect simulation results, the identified system does not capture the full dynamics. (b) Constructing a basis that contains also $ x_1 \abs{x_1} $ leads to correct results, the original and identified system are identical up to numerical errors.}
    \label{fig:Chua SINDy}
\end{figure}

\end{example}

\begin{remark}[Dependence of SINDy on basis functions]
The example illustrates that in order to obtain an accurate representation of the system, a priori knowledge about the governing equations might be required, which is in general not available when the method is applied to measurement data or data stemming from black-box models. If the basis functions are not chosen appropriately, then SINDy will not faithfully reproduce the dynamics of the system.
\end{remark}

\subsection{The tensor-train format}

Tensors of order $ d $ are multidimensional arrays $\mathbf{T} \in \mathbb{R}^{n_1 \times \dots \times n_d}$. Figure~\ref{fig: tensors} shows a number of simple examples. The different dimensions $n_i \in \mathbb{N}$ for $i=1, \dots , d$ are called \emph{modes}. Elements of tensors are accessed by $\mathbf{T}_{j_1, \dots, j_d} \in \mathbb{R}$, with $1 \leq j_i \leq n_i$. If we fix certain indices, colons are used to indicate the free modes (similar to Matlab's or Python's colon notation). The storage consumption of a tensor can be estimated as $O(n^d)$, where $n$ is the maximum of all mode sizes $ n_1, \dots, n_d $. Storing a $d$-dimensional tensor may be infeasible for large $d$ since the number of elements of a tensor grows exponentially with the order. This is typically called the \emph{curse of dimensionality}. However, it is possible to mitigate this by exploiting low-rank tensor approximations. If the underlying correlation structure admits such a decomposition, an enormous reduction in complexity can be achieved. The basic idea is to decompose tensors $\mathbf{T} \in \mathbb{R}^{n_1 \times \dots \times n_d}$ into different components using tensor products, see~\cite{WILSON1901}.

\begin{figure}[htbp]
\centering
\begin{subfigure}[b]{0.2\textwidth}
\centering
\caption{}
\begin{tikzpicture}
\node (1) [rectangle] at (0,0) {$\begin{bmatrix} 1 \\ 0 \\ 1 \\ 1 \\0 \end{bmatrix}$};
\end{tikzpicture}
\end{subfigure}
\hfill
\begin{subfigure}[b]{0.3\textwidth}
\centering
\caption{}
\begin{tikzpicture}
\node (1) [rectangle] {$\begin{bmatrix} 1 & 0 & 1 & 0\\ 0 & 1 & 1 & 0 \\ 1 & 1 & 0 & 1 \\ 1 & 0 & 1 & 0 \\ 0 & 1 & 0 & 1\end{bmatrix}$};
\end{tikzpicture}
\end{subfigure}
\hfill
\begin{subfigure}[b]{0.4\textwidth}
\centering
\caption{}
\begin{tikzpicture}[
myrect/.style={
  rectangle,
  draw,
  inner sep=0pt,
  fit=#1}
]
\def\da{0.38}
\def\db{0.475}
\node (1) [draw=black, rectangle] at (\da+\da,\db+\db) {$\begin{bmatrix} 1 & 0 & 1 & 1 \\ 1 & 0 & 0 & 0 \\ 0 & 1 & 1 & 0\end{bmatrix}$};
\node (3) [rectangle] at (0,0) {\phantom{\phantom{$\begin{bmatrix} 1 & 1 & 0 & 1 \\ 0 & 1 & 1 & 0 \\ 1 & 0 & 0 & 1\end{bmatrix}$}}};
\draw[-,dotted] (1.south west)--(3.south west);
\node (2) [fill=white, opacity=0.9, rectangle] at (\da,\db) {\phantom{$\begin{bmatrix} 0 & 1 & 1 & 0 \\ 1 & 0 & 0 & 1 \\ 1 & 1 & 1 & 1\end{bmatrix}$}};
\node (2-2) [draw=black, rectangle] at (\da,\db) {$\begin{bmatrix} 0 & 1 & 1 & 0 \\ 1 & 0 & 0 & 1 \\ 1 & 1 & 1 & 1\end{bmatrix}$};
\node (3) [fill=white, opacity=0.9, rectangle] at (0,0) {\phantom{$\begin{bmatrix} 1 & 1 & 0 & 1 \\ 0 & 1 & 1 & 0 \\ 1 & 0 & 0 & 1\end{bmatrix}$}};
\node (3-2) [draw=black, rectangle] at (0,0) {$\begin{bmatrix} 1 & 1 & 0 & 1 \\ 0 & 1 & 1 & 0 \\ 1 & 0 & 0 & 1\end{bmatrix}$};
\draw[-,dotted] (1.north east)--(3.north east);
\draw[-,dotted] (1.north west)--(3.north west);
\draw[-,dotted] (1.south east)--(3.south east);
\end{tikzpicture}
\end{subfigure}
\caption[Low-dimensional tensors represented by arrays]{Low-dimensional tensors represented by arrays: (a) A tensor of order 1 is a vector. (b) A tensor of order 2 is a matrix. (c) A tensor of order 3 can be visualized as layers of matrices.}
\label{fig: tensors}
\end{figure}

Examples of tensor formats are \emph{rank-one tensors} \cite{FRIEDLAND2013, HACKBUSCH2012} and the \emph{canonical format} \cite{HITCHCOCK1927, KOLDA2009}. A tensor $\mathbf{T} \in \R^{n_1 \times \dots \times n_d}$ of order $d$ is called a rank-one tensor if it can be written as the tensor product of $d$ vectors. A tensor in the canonical format is simply the sum of rank-one tensors. For our purposes, we will rely on the \emph{tensor-train format} (TT format) \cite{OSELEDETS2009b,OSELEDETS2011}, a special case of the \emph{hierarchical Tucker format} \cite{HACKBUSCH2009,ARNOLD2013}. A tensor in the TT format is given by
\begin{equation}
    \mathbf{T} = \sum_{k_0=1}^{r_0} \cdots  \sum_{k_d=1}^{r_d}  \mathbf{T}^{(1)}_{k_0,:,k_1} \otimes \dots \otimes  \mathbf{T}^{(d)}_{k_{d-1},:,k_d}.
\end{equation}
The TT ranks $r_0 , \dots , r_d$, where $r_0 = r_d = 1$, determine the storage consumption of a tensor train and its complexity. Lower ranks imply a lower memory consumption and lower computational costs. Therefore, our aim is to compute low-rank approximations of high-dimensional tensors in the TT format. Figure~\ref{fig: tensor trains} shows the graphical representation -- motivated by~\cite{HOLTZ2012} -- of a tensor train $\mathbf{T} \in \mathbb{R}^{n_1 \times \dots \times n_d}$. A core is depicted by a circle with different arms indicating the modes of the tensor and the rank indices. For more information, we refer to~\cite{GELSS2017b, HACKBUSCH2012}.

\begin{figure}[htbp]
\centering
\begin{tikzpicture}
\draw[black] (0,0) -- node [label={[shift={(0,-0.15)}]$r_1$}] {} ++ (1.5,0) ;
\draw[black] (1.5,0) -- node [label={[shift={(0,-0.15)}]$r_2$}] {} ++ (1.5,0) ;
\draw[black] (3,0) -- ++ (1,0) ;
\draw[black, dotted] (4,0) -- ++ (1,0) ;
\draw[black] (5,0) -- ++ (1,0) ;
\draw[black] (6,0) -- node [label={[shift={(0,-0.19)}]$r_{d-1}$}] {} ++ (1.5,0) ;
\draw[black] (0,0) -- node [label={[shift={(0,-1)}]$n_1$}] {} ++ (0,-0.7) ;
\draw[black] (1.5,0) -- node [label={[shift={(0,-1)}]$n_2$}] {} ++ (0,-0.7) ;
\draw[black] (3,0) -- node [label={[shift={(0,-1)}]$n_3$}] {} ++ (0,-0.7) ;
\draw[black] (6,0) -- node [label={[shift={(0,-1)}]$n_{d-1}$}] {} ++ (0,-0.77) ;
\draw[black] (7.5,0) -- node [label={[shift={(0,-1)}]$n_d$}] {} ++ (0,-0.71) ;
\node[draw,shape=circle,fill=Gray, scale=0.65] at (0,0){};
\node[draw,shape=circle,fill=Gray, scale=0.65] at (1.5,0){};
\node[draw,shape=circle,fill=Gray, scale=0.65] at (3,0){};
\node[draw,shape=circle,fill=Gray, scale=0.65] at (6,0){};
\node[draw,shape=circle,fill=Gray, scale=0.65] at (7.5,0){};
\end{tikzpicture}
\caption{Graphical representation of tensor trains. Here, the first and the last core are considered as matrices, the other cores are tensors of order 3.}
\label{fig: tensor trains}
\end{figure}
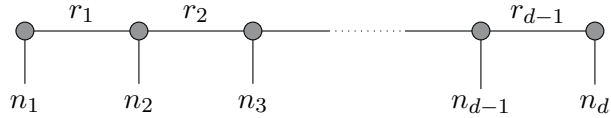

We represent the TT cores as two-dimensional arrays containing vectors as elements. For $\mathbf{T} \in \mathbb{R}^{n_1 \times \dots \times n_d}$ with cores $\mathbf{T}^{(i)} \in \mathbb{R}^{r_{i-1} \times n_i \times r_i}$, a single core is written as
\begin{equation}
    \left\llbracket \mathbf{T}^{(i)} \right\rrbracket = 
    \left\llbracket
    \begin{matrix}
    & \mathbf{T}^{(i)}_{1,:,1} & \cdots & \mathbf{T}^{(i)}_{1,:,r_i} & \\
    & & & & \\
    & \vdots & \ddots & \vdots & \\
    & & & & \\
    & \mathbf{T}^{(i)}_{r_{i-1},:,1} & \cdots & \mathbf{T}^{(i)}_{r_{i-1},:,r_i} &
    \end{matrix}\right\rrbracket.
    \label{eq: core notation - single core}
\end{equation}
We then use the notation $\mathbf{T} = \left\llbracket \mathbf{T}^{(1)}\right\rrbracket \otimes \dots \otimes \left\llbracket \mathbf{T}^{(d)}\right\rrbracket$ for representing tensor trains $\mathbf{T}$, see~\cite{GELSS2017, KAZEEV2012}. This can be regarded as a generalization of the standard matrix multiplication where the cores contain vectors as elements instead of scalar values. Just like multiplying two matrices, we compute the tensor products of the corresponding elements and then sum over the columns and rows, respectively.

In order to construct \emph{matricizations} and \emph{vectorizations} --~also called \emph{tensor unfoldings}~\cite{HOLTZ2012}~-- we define a bijection $\phi_N$ for the mode set $N = (n_1, \dots, n_d)^T \in \mathbb{N}^d$ with
\begin{equation*}
    \begin{gathered}
    \phi_N \colon \{1, \ldots, n_1 \} \times \dots \times \{1, \ldots, n_d \} \rightarrow \{1 , \ldots , \prod_{k=1}^{d} n_k\},\\
    (j_1, \dots, j_d) \mapsto \overline{j_1, \dots, j_d} := \phi_N(j_1, \dots, j_d),
    \end{gathered}
\end{equation*}
where $\overline{j_1, \dots, j_d}$ is the single-index representation of the corresponding multi-index $(j_1, \dots, j_d)$. Typically, bijections based on reverse lexicographic ordering (column-major order in Matlab and Fortran) or colexicographic ordering (row-major order in Numpy and C\texttt{++}) are used, see, e.g.,~\cite{CICHOCKI2016}.

\begin{definition}[Matricization and vectorization] \label{def: matricization}
Let $N = (n_1 , \ldots , n_d)^T$ be a mode set and $\mathbf{T} \in \mathbb{R}^{n_1 \times \dots \times n_d}$ a tensor. For two ordered subsets $N' = (n_1, \dots, n_l)^T$ and $N'' = (n_{l+1} , \ldots , n_d)^T$ of $N$ with $1 \leq l < d$, the \emph{matricization} of $\mathbf{T}$ with respect to $N'$ and $N''$ is given by
\begin{equation}
    \left( \mat{\mathbf{T}}{N'}{N''} \right)_{\overline{j_1 , \ldots , j_l} , \overline{j_{l+1}, \ldots, j_{d}}} =  \mathbf{T}_{j_1, \ldots, j_d}.
    \label{eq_matricization}
\end{equation}
The \emph{vectorization} of $\mathbf{T}$ is given by
\begin{equation*}
\mathrm{vec}(\mathbf{T}) := \left( \mat{\mathbf{T}}{N}{~} \right)_{ \overline{j_1 , \ldots , j_d} } = \mathbf{T}_{j_1 , \ldots , j_d}.
\end{equation*}
\end{definition}
Definition \ref{def: matricization} can also be generalized to arbitrary subsets of the mode set $N$, see, e.g.,~\cite{GELSS2017b}. We will here, however, only need the special case described above. Given a TT core $\mathbf{T}^{(i)} \in \R^{r_{i-1} \times n_i \times r_i}$, we now define
\begin{equation}
    \mathcal{L}\left( \mathbf{T}^{(i)}\right) = \mat{\mathbf{T}^{(i)}}{r_{i-1} , n_i}{r_i} \qquad \textrm{and} \qquad \mathcal{R}\left( \mathbf{T}^{(i)}\right) = \mat{\mathbf{T}^{(i)}}{r_{i-1} }{n_i, r_i}.
    \label{eq: left-/right-unfolding}
\end{equation}

The matricization $\mathcal{L} \left( \mathbf{T}^{(i)}\right)$ is called the \emph{left-unfolding} of $\mathbf{T}^{(i)}$ and $\mathcal{R} \left( \mathbf{T}^{(i)}\right)$ the \emph{right-unfolding} of $\mathbf{T}^{(i)}$, see~\cite{HOLTZ2012}. A TT core is called \emph{left-orthonormal} if its left-unfolding is orthonormal with respect to the columns, i.e.,
\begin{equation*}
 \left( \mathcal{L} \left( \mathbf{T}^{(i)}\right) \right)^T \cdot \mathcal{L} \left( \mathbf{T}^{(i)}\right) = \mat{\mathbf{T}^{(i)}}{r_i}{r_{i-1} , n_i} \cdot \mat{\mathbf{T}^{(i)}}{r_{i-1} , n_i}{r_i} = I \in \R^{r_i \times r_i},
\end{equation*}
and \emph{right-orthonormal} if its right-unfolding is orthonormal with respect to the rows, i.e.,
\begin{equation*}
 \mathcal{R}\left( \mathbf{T}^{(i)}\right)  \cdot \left(  \mathcal{R}\left( \mathbf{T}^{(i)}\right) \right)^T  = \mat{\mathbf{T}^{(i)}}{r_{i-1}}{n_i, r_i}  \cdot \mat{\mathbf{T}^{(i)}}{n_i, r_i}{r_{i-1}}  = I \in \R^{r_{i-1} \times r_{i-1}}.
\end{equation*}
Algorithms for the left- and right-orthonormalization, respectively, can be found in~\cite{OSELEDETS2011, GELSS2017b}. Note that a tensor $\mathbf{T}$ remains the same if we apply an (exact) orthonormalization procedure to it. The algorithms simply compute a different but equivalent representation.

\subsection{Pseudoinverses in the TT format}\label{sec: pseudoinverse}

We now explain how to compute pseudoinverses of matricizations of tensors in the TT format. Later, we aim at applying this technique to the tensorized counterpart of the matrix $\Psi(\mathcal{X})$ defined in \eqref{eq: basis matrix}. It was shown that the pseudoinverse of certain tensor unfoldings can be computed directly in the TT format. We will briefly recapitulate the main idea, details can be found in~\cite{KLUS2018}. Given a tensor $\mathbf{T} \in \mathbb{R}^{n_1 \times \dots \times n_d \times m}$ (e.g., a tensor containing $m$ snapshots of a $d$-dimensional system with dimensions $n_1, \dots , n_d$) in the TT format, we consider the matricization
\begin{equation}
\label{eq: matricization of T}
T = \mat{\mathbf{T}}{n_1 , \dots , n_d}{m}.
\end{equation}
The standard way to calculate the pseudoinverse $T^+$ is based on using its singular value decomposition (SVD), i.e., $ T = U \, \Sigma \, V^T $ with $U^T U = V^T V = I $. The pseudoinverse of $T$ is then given by 
\begin{equation}\label{eq: pseudoinverse in full format}
  T^+ = V \, \Sigma^{-1} U^T. 
\end{equation}

\begin{figure}[htbp]
\centering
\begin{tikzpicture}
\def\g{0}
\draw[black] (-1.6,0.55) --  ++ (0,-7.6) ;
\node[anchor=east] at (-2.3,\g-0.3){Initial tensor train};
\draw[black] (0,\g) -- node [label={[shift={(0,-1)}]$n_1$}] {} ++ (0,-0.7) ;
\draw[black] (1,\g) -- node [label={[shift={(0,-1)}]$n_2$}] {} ++ (0,-0.7) ;
\draw[black] (3,\g) -- node [label={[shift={(0,-1)}]$n_d$}] {} ++ (0,-0.7) ;
\draw[black] (5,\g) -- node [label={[shift={(0,-0.95)}]$m$}] {} ++ (0,-0.7) ;
\draw[black] (0,\g) -- node [label={[shift={(0,0)}]$1$}] {} ++ (-0.7,0) ;
\draw[black] (0,\g) -- node [label={[shift={(0,0)}]$r_1$}] {} ++ (1,0) ;
\draw[black] (1,\g) -- node [] {} ++ (0.66,0) ;
\draw[black, dotted] (1.66,\g) -- node [] {} ++ (0.66,0) ;
\draw[black] (2.33,\g) -- node [] {} ++ (0.66,0) ;
\draw[black] (3,\g) -- node [label={[shift={(0,0)}]$r_d$}] {} ++ (2,0);
\draw[black] (5,\g) -- node [label={[shift={(0,0)}]$1$}] {} ++ (0.7,0) ;
\node[draw,shape=circle,fill=YellowOrange, scale=0.65] at (0,\g){}; 
\node[draw,shape=circle,fill=YellowOrange, scale=0.65] at (1,\g){}; 
\node[draw,shape=circle,fill=YellowOrange, scale=0.65] at (3,\g){}; 
\node[draw,shape=circle,fill=cyan, scale=0.65] at (5,\g){}; 

\def\g{-2.5}
\node[anchor=east] at (-2.3,\g-0.2){Left- and right-}; 
\node[anchor=east] at (-2.3,\g-0.6){orthonormalization}; 
\draw[black] (0,\g) -- node [label={[shift={(0,-1)}]$n_1$}] {} ++ (0,-0.7) ;
\draw[black] (1,\g) -- node [label={[shift={(0,-1)}]$n_2$}] {} ++ (0,-0.7) ;
\draw[black] (3,\g) -- node [label={[shift={(0,-1)}]$n_d$}] {} ++ (0,-0.7) ;
\draw[black] (5,\g) -- node [label={[shift={(0,-0.95)}]$m$}] {} ++ (0,-0.7) ;
\draw[black] (0,\g) -- node [label={[shift={(0,0)}]$1$}] {} ++ (-0.7,0) ;
\draw[black] (0,\g) -- node [label={[shift={(0,0)}]$s_1$}] {} ++ (1,0) ;
\draw[black] (1,\g) -- node [] {} ++ (0.66,0) ;
\draw[black, dotted] (1.66,\g) -- node [] {} ++ (0.66,0) ;
\draw[black] (2.33,\g) -- node [] {} ++ (0.66,0) ;
\draw[black] (3,\g) -- node [label={[shift={(0,0)}]$s_d$}] {} ++ (1,0);
\draw[black] (4,\g) -- node [label={[shift={(0,0)}]$s_d$}] {} ++ (1,0);
\draw[black] (5,\g) -- node [label={[shift={(0,0)}]$1$}] {} ++ (0.7,0) ;
\node[draw,shape=semicircle,rotate=135 ,fill=white, anchor=south,inner sep=2pt, outer sep=0pt, scale=0.75] at (0,\g){}; 
\node[draw,shape=semicircle,rotate=315 ,fill=YellowOrange, anchor=south,inner sep=2pt, outer sep=0pt, scale=0.75] at (0,\g){};
\node[draw,shape=semicircle,rotate=135 ,fill=white, anchor=south,inner sep=2pt, outer sep=0pt, scale=0.75] at (1,\g){}; 
\node[draw,shape=semicircle,rotate=315 ,fill=YellowOrange, anchor=south,inner sep=2pt, outer sep=0pt, scale=0.75] at (1,\g){};
\node[draw,shape=semicircle,rotate=135 ,fill=white, anchor=south,inner sep=2pt, outer sep=0pt, scale=0.75] at (3,\g){}; 
\node[draw,shape=semicircle,rotate=315 ,fill=YellowOrange, anchor=south,inner sep=2pt, outer sep=0pt, scale=0.75] at (3,\g){};
\node[draw,shape=semicircle,rotate=45  ,fill=cyan, anchor=south,inner sep=2pt, outer sep=0pt, scale=0.75] at (5,\g){}; 
\node[draw,shape=semicircle,rotate=225 ,fill=white, anchor=south,inner sep=2pt, outer sep=0pt, scale=0.75] at (5,\g){};
\node[draw,shape=circle,fill=Gray, scale=0.5] at (4,\g){}; 
\node[] at (4,\g-0.4){$\Sigma$}; 

\def\g{-6}
\node[anchor=east] at (-2.3,\g+0.4){Pseudoinverse}; 
\draw[black] (2,\g) -- node [label={[shift={(0,-1)}]$n_1$}] {} ++ (0,-0.7) ;
\draw[black] (3,\g) -- node [label={[shift={(0,-1)}]$n_2$}] {} ++ (0,-0.7) ;
\draw[black] (5,\g) -- node [label={[shift={(0,-1)}]$n_d$}] {} ++ (0,-0.7) ;
\draw[black] (0,\g) -- node [label={[shift={(0,-0.95)}]$m$}] {} ++ (0,-0.7) ;
\draw[black] (0,\g) -- node [label={[shift={(0,0)}]$1$}] {} ++ (2,0);
\draw[black] (2,\g) -- node [label={[shift={(0,0)}]$s_1$}] {} ++ (1,0);
\draw[black] (3,\g) -- node [] {} ++ (0.66,0) ;
\draw[black, dotted] (3.66,\g) -- node [] {} ++ (0.66,0) ;
\draw[black] (4.33,\g) -- node [] {} ++ (0.66,0) ;
\draw[black, rounded corners=0.1cm] (0,\g) -- node [label={[shift={(0,0)}]$s_d$}] {} ++ (-0.7,0) -- ++ (0,1.5) -- ++ (6.4,0) -- ++ (0,-1.5) -- node [label={[shift={(0,0)}]$s_d$}] {} ++ (-0.7,0); 
\node[draw,shape=semicircle,rotate=135 ,fill=white, anchor=south,inner sep=2pt, outer sep=0pt, scale=0.75] at (2,\g){}; 
\node[draw,shape=semicircle,rotate=315 ,fill=YellowOrange, anchor=south,inner sep=2pt, outer sep=0pt, scale=0.75] at (2,\g){};
\node[draw,shape=semicircle,rotate=135 ,fill=white, anchor=south,inner sep=2pt, outer sep=0pt, scale=0.75] at (3,\g){}; 
\node[draw,shape=semicircle,rotate=315 ,fill=YellowOrange, anchor=south,inner sep=2pt, outer sep=0pt, scale=0.75] at (3,\g){};
\node[draw,shape=semicircle,rotate=135 ,fill=white, anchor=south,inner sep=2pt, outer sep=0pt, scale=0.75] at (5,\g){}; 
\node[draw,shape=semicircle,rotate=315 ,fill=YellowOrange, anchor=south,inner sep=2pt, outer sep=0pt, scale=0.75] at (5,\g){};
\node[draw,shape=semicircle,rotate=45  ,fill=cyan, anchor=south,inner sep=2pt, outer sep=0pt, scale=0.75] at (0,\g){}; 
\node[draw,shape=semicircle,rotate=225 ,fill=white, anchor=south,inner sep=2pt, outer sep=0pt, scale=0.75] at (0,\g){};
\node[draw,shape=circle,fill=Gray, scale=0.5] at (2.5,\g+1.5){}; 
\node[] at (2.5,\g+1.5-0.4){$\Sigma^{-1}$}; 
\end{tikzpicture}
\caption[]{Computation of the pseudoinverse of a tensor train: After left- and right-orthonormalization of the initial tensor train (half-filled circles), the pseudoinverse can be represented as a cyclic tensor train with reordered cores. Note that we here depict the special case of the pseudoinversion of a tensor $\mathbf{T} \in \mathbb{R}^{n_1 \times \dots \times n_d \times m}$, the general case is considered in~\cite{KLUS2018}.}
\label{fig: Pseudoinverse}
\end{figure}
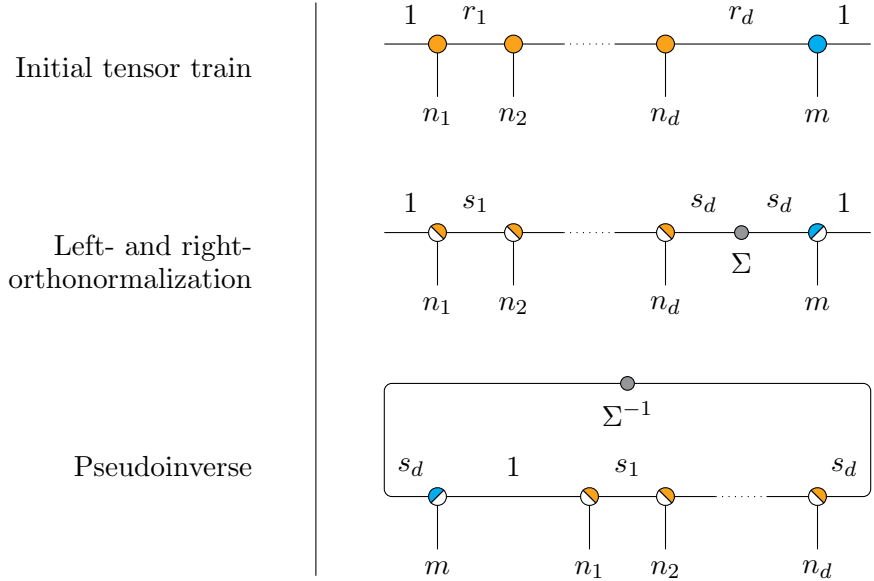

In order to directly construct the pseudoinverse $T^+$ from a TT representation of $\mathbf{T}$, we apply the two orthonormalization procedures. That is, we left-orthonormalize the TT cores from $\mathbf{T}^{(1)}$ to $\mathbf{T}^{(d)}$ and right-orthonormalize the core $\mathbf{T}^{(d+1)}$. Doing this, we obtain a global SVD of the matricization $T$. Similar to \eqref{eq: pseudoinverse in full format}, the pseudoinverse $T^+$ can then be obtained by reordering the cores, see Figure~\ref{fig: Pseudoinverse}. A detailed description of the corresponding algorithms can be found in~\cite{KLUS2018}, where we have shown how to generalize the orthonormalization procedure to arbitrary matricizations of tensor trains. Algorithm~\ref{alg:Pseudoinverse} describes the pseudoinversion procedure for a tensor train $\mathbf{T} \in \mathbb{R}^{n_1 \times \dots \times n_d \times m}$ in detail.

\begin{algorithm}[htbp]
    \caption{Pseudoinversion of tensor trains.}
    \label{alg:Pseudoinverse}
    \begin{algorithmic}[1]
    \Require \parbox[t]{\dimexpr\linewidth-31.5pt}{Tensor train $\mathbf{T} \in \mathbb{R}^{n_1 \times \dots \times n_d \times m}$.\vphantom{$T = \mat{\mathbf{T}}{n_1, \dotsc, n_d}{ m}$}}
    \Ensure Pseudoinverse of $T = \mat{\mathbf{T}}{n_1, \dotsc, n_d}{ m}$.
    \algrule
    \State Left-orthonormalize $\mathbf{T}^{(1)}, \dotsc, \mathbf{T}^{(d-1)}$ and right-orthonormalize $\mathbf{T}^{(d+1)}$.
    \State Compute SVD of $\mathcal{L} \left( \mathbf{T}^{(d)} \right)$, i.e., $\mathcal{L} \left( \mathbf{T}^{(d)} \right) = U \Sigma V^T$ with $\Sigma \in \mathbb{R}^{s \times s}$.
    \State Define $\mathbf{U} \in \mathbb{R}^{r_{d-1} \times n_d \times s}$ as a reshaped version of $U$ with $\mathbf{U}_{k,x,l } = U_{\overline{k, x}, l}$.
    \State Define $\mathbf{V} \in \mathbb{R}^{s \times m }$ by $\mathcal{R} \left( \mathbf{V} \right) = V^T \cdot \mathcal{R} \left( \mathbf{T}^{(d+1)} \right)$.
    \State Set $\mathbf{T}^{(d)}$  to $\mathbf{U}$, $\mathbf{T}^{(d+1)}$  to $\mathbf{V}$, and $r_d$ to $s$.
    \State Define $\tilde{U} = \mat{\left( \sum_{k_0 =1}^{r_0} \cdots \sum_{k_{d-1}=1}^{r_{d-1}} \mathbf{T}^{(1)}_{k_0, :, k_1 } \otimes \dots \otimes \mathbf{T}^{(d)}_{k_{d-1}, :, : } \right)}{n_1, \dotsc, n_d}{r_d}$.
    \State Define $\tilde{V} = \mat{\mathbf{T}^{(d+1)}}{m}{r_d}$.
    \State Define $ T^+ = \tilde{V} \, \Sigma^{-1} \, \tilde{U}^T$.
    \end{algorithmic}
\end{algorithm}

An important aspect is that we do not need to compute the pseudoinverse of $T$ explicitly. Instead, we only orthonormalize the TT cores and compute the matrix $\Sigma$ by executing the lines 1 to 5 of Algorithm~\ref{alg:Pseudoinverse}. We then store the representation
\begin{equation*}
    \begin{split}
        \mathbf{T}^+ = \sum_{k_1 = 1}^{r_1} \dots \sum_{k_{d} = 1}^{r_{d}} \sigma^{-1}_{ k_d} \cdot   \mathbf{T}^{(d+1)}_{k_{d}, :, 1 } \otimes  \mathbf{T}^{(1)}_{1, :, k_{1} } \otimes \dots \otimes \mathbf{T}^{(d)}_{k_{d-1}, :, k_d } \in \mathbb{R}^{m \times n_1 \times \dots \times n_d},
    \end{split}
\end{equation*}
which can be either regarded as the sum of $r_d$ tensor trains scaled by $\sigma^{-1}_{1}, \dotsc, \sigma^{-1}_{r_d} $ or as a cyclic tensor train \cite{HACKBUSCH2012} as depicted in Figure~\ref{fig: Pseudoinverse}. It holds that
\begin{equation*}
 \mat{\mathbf{T}^+}{m}{n_1, \dots, n_d} = \left( \mat{\mathbf{T}}{n_1, \dots, n_d}{m}\right)^+.
\end{equation*}

For the orthonormalization procedures, we only consider compact/reduced SVDs, i.e., only the nonzero singular values and corresponding singular vectors are stored. It is also possible to use truncated SVDs within Algorithm \ref{alg:Pseudoinverse}, i.e., we discard all singular values $\sigma_k$ with $\sigma_k / \sigma_\textrm{max} < \varepsilon$, where $\sigma_\textrm{max}$ is the largest singular value and $\varepsilon$ a given threshold. In this way, we can reduce the computational costs and the storage consumption even further.

\begin{lemma}[Complexity of the pseudoinverse computation]\label{lemma: pseudoinverse - effort}
  The computational effort of Algorithm \ref{alg:Pseudoinverse} can be estimated as $O(d \cdot n \cdot r^3 + m \cdot r^2)$, where 
  $n$ is the maximum of the first $d$ modes and $r$ the maximum of all TT ranks. 
\end{lemma}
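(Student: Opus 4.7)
The plan is to walk through Algorithm~\ref{alg:Pseudoinverse} line by line and bound the cost of each step, then sum. Since the algorithm never materializes $T$ as a dense $(n_1 \cdots n_d) \times m$ matrix, every operation acts either on a single TT core or on the small matrices produced by a local SVD/QR, and this is what makes the claimed bound possible.

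First I would handle Line~1, the orthonormalization sweep. The standard left-orthonormalization of a core $\mathbf{T}^{(i)} \in \mathbb{R}^{r_{i-1} \times n_i \times r_i}$ proceeds by taking a QR decomposition of the left-unfolding $\mathcal{L}(\mathbf{T}^{(i)}) \in \mathbb{R}^{r_{i-1} n_i \times r_i}$ and absorbing the triangular factor into the next core; the QR costs $O(r_{i-1} n_i r_i^2)$ and the subsequent multiplication into $\mathbf{T}^{(i+1)}$ costs $O(r_i^2 n_{i+1} r_{i+1})$, both bounded by $O(n r^3)$. Running this for $i = 1, \dots, d-1$ yields $O(d \cdot n \cdot r^3)$. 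The single right-orthonormalization of $\mathbf{T}^{(d+1)} \in \mathbb{R}^{r_d \times m \times 1}$ acts on the right-unfolding $\mathcal{R}(\mathbf{T}^{(d+1)}) \in \mathbb{R}^{r_d \times m}$ and costs $O(r_d^2 m) = O(m r^2)$, contributing the second summand.

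Next, Line~2 computes the SVD of $\mathcal{L}(\mathbf{T}^{(d)}) \in \mathbb{R}^{r_{d-1} n_d \times r_d}$, which is $O(r_{d-1} n_d r_d^2) = O(n r^3)$. Line~3 is a pure reshape and free. Line~4 multiplies $V^T \in \mathbb{R}^{s \times r_d}$ by $\mathcal{R}(\mathbf{T}^{(d+1)}) \in \mathbb{R}^{r_d \times m}$ at cost $O(s \cdot r_d \cdot m) = O(m r^2)$, since $s \le r_d \le r$. Lines~5--8 are only reassignments and symbolic definitions of $\tilde U$, $\tilde V$, and $T^+$; no arithmetic is performed here because, as the paragraph after the algorithm stresses, one stores the cyclic TT representation rather than expanding $T^+$ explicitly. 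Adding all contributions yields the bound $O(d \cdot n \cdot r^3 + m \cdot r^2)$.

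The only non-routine point to check is that every orthonormalization step can indeed be performed locally with the stated cost: in particular, that the transferred triangular factor from core $i$ never enlarges the rank encountered at core $i+1$ beyond $r$, so the per-core bound $O(n r^3)$ is uniform in $i$. This is exactly the content of the standard TT left-orthonormalization analysis referenced in the paper (\cite{OSELEDETS2011, GELSS2017b}), and I would simply invoke it rather than redo the bookkeeping. Everything else is a direct flop count on matrices whose dimensions are bounded by $n$, $m$, or $r$, so no genuine obstacle arises.
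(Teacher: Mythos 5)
Your proposal is correct and follows essentially the same argument as the paper: bound each local orthonormalization/SVD on a single unfolding by $O(n \cdot r^3)$, treat the last core separately to get the $O(m \cdot r^2)$ term, and note that the remaining steps are reshapes or small contractions that add nothing beyond these bounds. The only cosmetic difference is that you phrase the sweep via QR factorizations while the paper counts SVDs of the unfoldings, which does not change the complexity.
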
 
\begin{proof}
  The overall computational costs of Algorithm \ref{alg:Pseudoinverse} can be estimated by the number of applied SVDs. The complexity of calculating an SVD of a left-/right-unfolding can be estimated as $O(n \cdot r^3)$. Since we compute the left-orthonormalizations of the first $d$ cores and the right-orthonormalization of $\mathcal{R}(\mathbf{T}^{(d+1)}) \in \mathbb{R}^{r \times m}$, we obtain the estimation $O(d \cdot n \cdot r^3 + m \cdot r^2)$, assuming that $r \leq m$.
\end{proof}

\section{Tensor-based reformulation of SINDy}\label{sec: tensor-based SINDy}

After introducing SINDy and the TT format, we will now show how to combine the data-driven recovery of dynamical systems with tensor decompositions. We restrict ourselves to a set of basis functions $ \mathbb{D} = \{\psi_1 , \dots , \psi_p\}$, $\psi_j \colon \mathbb{R} \rightarrow \mathbb{R}$, and corresponding basis matrices whose entries are given by products of the given basis functions applied to the different dimensions of all snapshots. Note that the basis functions $\psi_j$, $j = 1, \dots , p$, are now defined on $\mathbb{R}$ and not on $\mathbb{R}^d$ as in Section~\ref{sec: recovery}. Exploiting the TT format for the construction of the tensorized counterpart of the basis matrix $\Psi(\mathcal{X})$, we are able to express large numbers of combinations of the basis functions $\psi_1, \dots , \psi_p$ in a compact way as tensor-products of the one-dimensional basis functions. By doing this, we can not only reduce the storage consumption of the basis matrix but may also lower the computational costs compared to the conventional SINDy-like implementation. 

In what follows, we will describe different approaches for the tensor-based construction and show how to solve the minimization problem 
\begin{equation}\label{eq: min problem in TT}
 \min_{\mathbf{\Xi} } \lVert \mathcal{Y} - \mathbf{\Xi}^T \mathbf{\Psi}(\mathcal{X}) \rVert_F
\end{equation}
directly in the TT format. Here, $\mathbf{\Xi}$ and $\mathbf{\Psi}(\mathcal{X})$ denote the tensorized counterparts of the matrices $\Xi$ and $\Psi(\mathcal{X})$ introduced in Section~\ref{sec: notation}. As mentioned in the introduction, we call our method MANDy.

\subsection{Basis decomposition}

We will consider two different types of basis decompositions. Let $X = [x_1, \dots, x_d]^T \in \mathbb{R}^d$ be a vector and $\psi_j : \mathbb{R} \rightarrow \mathbb{R}$, $j = 1, \dots, p$, basis functions. We consider the two rank-one decompositions
\begin{equation}
\label{eq: psi f,x}
\mathbf{\Psi}_{\textrm{cm}}(X) = \mathbf{\Psi}_{\textrm{cm}}^{(1)}(X) \otimes \dots \otimes \mathbf{\Psi}_{\textrm{cm}}^{(d)}(X) = \begin{bmatrix} \psi_1 (x_1) \\ \vdots \\ \psi_p(x_1) \end{bmatrix} \otimes \dots \otimes \begin{bmatrix} \psi_1 (x_d) \\ \vdots \\ \psi_p(x_d) \end{bmatrix} \in \mathbb{R}^{p \times \dots \times p}
\end{equation}
and
\begin{equation}
\label{eq: psi x,f}
\mathbf{\Psi}_{\textrm{fm}}(X) = \mathbf{\Psi}_{\textrm{fm}}^{(1)}(X) \otimes \dots \otimes \mathbf{\Psi}_{\textrm{fm}}^{(p)}(X) = \begin{bmatrix} \psi_1 (x_1) \\ \vdots \\ \psi_1(x_d) \end{bmatrix} \otimes \dots \otimes \begin{bmatrix} \psi_p (x_1) \\ \vdots \\ \psi_p(x_d) \end{bmatrix} \in \mathbb{R}^{d \times \dots \times d}.
\end{equation}
That is, for \eqref{eq: psi f,x} we apply all basis functions to one specific element of the vector $X$ in each core while we apply only one basis function to all elements of $X$ in each core of \eqref{eq: psi x,f}. Analogously to the row and column major order of multidimensional arrays, we call $\mathbf{\Psi}_\textrm{cm}$ a \emph{coordinate-major decomposition} and $\mathbf{\Psi}_\textrm{fm}$ a \emph{function-major decomposition}. It holds that 
\begin{equation*}
    \left( \mathbf{\Psi}_\textrm{cm}(X) \right)_{j_1 , \dots , j_d} = \psi_{j_1}(x_1) \cdot \ldots \cdot  \psi_{j_d}(x_d) \quad \text{and} \quad \left( \mathbf{\Psi}_\textrm{fm}(X) \right)_{j_1, \dots, j_p} = \psi_1(x_{j_1}) \cdot \ldots \cdot \psi_p(x_{j_p}). 
\end{equation*}
Using the decompositions \eqref{eq: psi f,x} and \eqref{eq: psi x,f}, we can express the tensors $\mathbf{\Psi}_\textrm{cm}(X)$ and $\mathbf{\Psi}_\textrm{fm}(X)$ in a storage-efficient way. The memory consumption of the full representations of the tensors is $O(p^d)$ and $O(d^p)$, respectively, whereas the memory consumption of both rank-one representations is $O(p \cdot d)$.

\begin{remark}[Choice of rank-one decompositions]
What we present here are only two types of rank-one decompositions for a set of basis functions. In fact, other decompositions are also possible and specific problems might necessitate more complex representations in the future. However, we will focus on decompositions of the form \eqref{eq: psi f,x} or \eqref{eq: psi x,f}.
\end{remark}

\subsection{Multidimensional approximation of nonlinear dynamical systems}

For $m$ different vectors $X_k = \left[x_{k,1}, \dots , x_{k,d}\right]^T \in \mathbb{R}^d$, $k = 1, \dots , m$, stored in a matrix $\mathcal{X}$ (see Section \ref{sec: recovery}), we aim at using the decompositions \eqref{eq: psi f,x} and \eqref{eq: psi x,f} to construct counterparts of the matrices
\begin{equation}\label{eq: matrix coordinate major}
    \Psi_\textrm{cm}(\mathcal{X}) =
    \begin{bmatrix}
        \Psi_\textrm{cm}(X_1) & \Psi_\textrm{cm}(X_2) & \dots & \Psi_\textrm{cm}(X_m)
    \end{bmatrix} \in \mathbb{R}^{p^d \times m}
\end{equation}
and
\begin{equation}\label{eq: matrix function major}
    \Psi_\textrm{fm}(\mathcal{X}) =
    \begin{bmatrix}
        \Psi_\textrm{fm}(X_1) & \Psi_\textrm{fm}(X_2) & \dots & \Psi_\textrm{fm}(X_m)
    \end{bmatrix} \in \mathbb{R}^{d^p \times m}
\end{equation}
directly in the TT format. Here, $\Psi_\textrm{cm}(X_k) \in \mathbb{R}^{p^d}$ and $\Psi_\textrm{fm}(X_k) \in \mathbb{R}^{d^p}$ denote the vectorizations $\textrm{vec}\left(\mathbf{\Psi}_\textrm{cm}(X_k)\right)$ and $\textrm{vec}\left(\mathbf{\Psi}_\textrm{fm}(X_k)\right)$, respectively. Collecting the rank-one decompositions given in \eqref{eq: psi f,x} for all vectors $X_1, \dots, X_m$ in a single TT decomposition, we obtain
\begin{equation}\label{eq: TT coordinate major}
\begin{split}
 \mathbf{\Psi}_\textrm{cm}(\mathcal{X}) &= \left \llbracket \mathbf{\Psi}_\textrm{cm} ^{(1)} (\mathcal{X})\right \rrbracket \otimes \left \llbracket \mathbf{\Psi}_\textrm{cm} ^{(2)} (\mathcal{X})\right \rrbracket \otimes \dots \otimes \left \llbracket \mathbf{\Psi}_\textrm{cm} ^{(d)} (\mathcal{X})\right \rrbracket \otimes \left \llbracket \mathbf{\Psi}_\textrm{cm} ^{(d+1)} (\mathcal{X})\right \rrbracket \\
 &= \left \llbracket \begin{matrix}
  \mathbf{\Psi}_\textrm{cm}^{(1)} (X_1) & \cdots & \mathbf{\Psi}_\textrm{cm}^{(1)} (X_m) 
 \end{matrix} \right\rrbracket \otimes
 \left\llbracket\begin{matrix}
  \mathbf{\Psi}_\textrm{cm}^{(2)} (X_1) & & 0 \\
   & \ddots & \\
   0 & & \mathbf{\Psi}_\textrm{cm}^{(2)} (X_m)
 \end{matrix} \right\rrbracket \otimes \cdots \\
 & \qquad \cdots \otimes
 \left \llbracket \begin{matrix}
  \mathbf{\Psi}_\textrm{cm}^{(d)} (X_1) & & 0 \\
   & \ddots & \\
   0 & & \mathbf{\Psi}_\textrm{cm}^{(d)} (X_m) 
 \end{matrix} \right\rrbracket \otimes
 \left \llbracket \begin{matrix}
  e_1 \\
  \vdots \\
  e_m
 \end{matrix} \right\rrbracket \in \mathbb{R}^{p \times  \dots \times p \times m},
\end{split}
\end{equation}
where $e_k$, $k = 1, \dots , m$, denote the unit vectors of the standard basis in the $m$-dimensional Euclidean space. Analogously, we can write
\begin{equation}\label{eq: TT function major}
\begin{split}
 \mathbf{\Psi}_\textrm{fm}(\mathcal{X}) &= \left \llbracket \mathbf{\Psi}_\textrm{fm} ^{(1)} (\mathcal{X})\right \rrbracket \otimes \left \llbracket \mathbf{\Psi}_\textrm{fm} ^{(2)} (\mathcal{X})\right \rrbracket \otimes \dots \otimes \left \llbracket \mathbf{\Psi}_\textrm{fm} ^{(p)} (\mathcal{X})\right \rrbracket \otimes \left \llbracket \mathbf{\Psi}_\textrm{fm} ^{(p+1)} (\mathcal{X})\right \rrbracket \\
 &= 
 \left \llbracket \begin{matrix}
  \mathbf{\Psi}_\textrm{fm}^{(1)} (X_1) & \cdots & \mathbf{\Psi}_\textrm{fm}^{(1)} (X_m) 
 \end{matrix} \right\rrbracket \otimes
 \left \llbracket \begin{matrix}
  \mathbf{\Psi}_\textrm{fm}^{(2)} (X_1) & & 0 \\
   & \ddots & \\
   0 & & \mathbf{\Psi}_\textrm{fm}^{(2)} (X_m) 
 \end{matrix} \right\rrbracket \otimes \cdots \\
 & \qquad \cdots \otimes
 \left \llbracket \begin{matrix}
  \mathbf{\Psi}_\textrm{fm}^{(p)} (X_1) & & 0 \\
   & \ddots & \\
   0 & & \mathbf{\Psi}_\textrm{fm}^{(p)} (X_m) 
 \end{matrix} \right\rrbracket \otimes
 \left \llbracket \begin{matrix}
  e_1 \\
  \vdots \\
  e_m
 \end{matrix} \right\rrbracket \in \mathbb{R}^{d \times  \dots \times d \times m}.
\end{split}
\end{equation}
Both TT decompositions \eqref{eq: TT coordinate major} and \eqref{eq: TT function major} have a TT rank of $m$ and it holds that
\begin{equation*}
 \mat{\mathbf{\Psi}_\textrm{cm}(\mathcal{X})}{p , \dots, p}{m} = \Psi_\textrm{cm}(\mathcal{X}) \quad \textrm{and} \quad \mat{\mathbf{\Psi}_\textrm{fm}(\mathcal{X})}{d , \dots, d}{m} = \Psi_\textrm{fm}(\mathcal{X}).
\end{equation*}
We could also express the basis tensors in the canonical format, but using the TT format enables us to construct the pseudoinverse of $\mathbf{\Psi}_\textrm{cm}(\mathcal{X})$ and $\mathbf{\Psi}_\textrm{fm}(\mathcal{X})$ directly as a TT decomposition, see Section \ref{sec: pseudoinverse}. That is, we solve the minimization problem \eqref{eq: min problem in TT} by computing the pseudoinverse of $\mathbf{\Psi}_\textrm{cm}(\mathcal{X})$ or $\mathbf{\Psi}_\textrm{fm}(\mathcal{X})$, respectively, in the TT format and obtain 
\begin{equation}\label{eq: solution in TT}
  \mathbf{\Xi}^T = \mathcal{Y} \cdot \mathbf{\Psi}_{\textrm{cm}/\textrm{fm}}(\mathcal{X})^+
\end{equation}
with
\begin{equation*}
 \mat{\mathbf{\Xi}^T}{d}{p/d, \dots , p/d} = \mathcal{Y} \cdot \mat{\mathbf{\Psi}_{\textrm{cm}/\textrm{fm}}(\mathcal{X})^+}{m}{p/d, \dots, p/d} = \mathcal{Y} \cdot \Psi_{\textrm{cm}/\textrm{fm}}(\mathcal{X})^+.
\end{equation*}
For a detailed description of the tensor contraction in \eqref{eq: solution in TT}, see Figure~\ref{fig: contraction}.

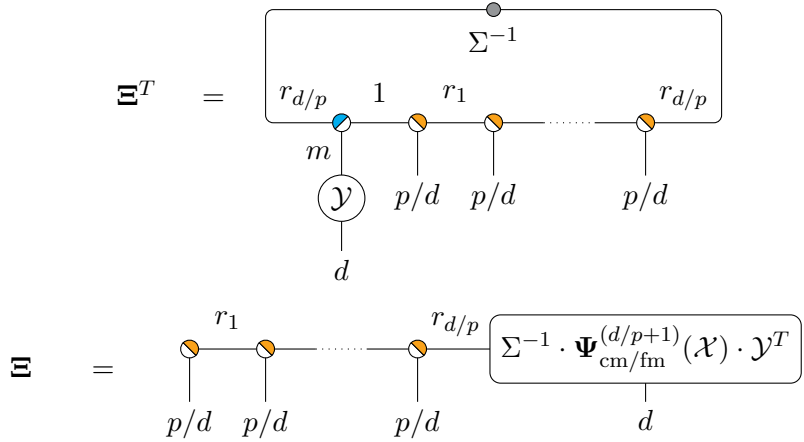
\begin{figure}[htbp]
\centering
\begin{tikzpicture}
\def\g{0}
\node[anchor=east] at (-2.3,\g+0.4){$\mathbf{\Xi}^T$}; 
\node[anchor=east] at (-1.4,\g+0.3){$=$}; 
\draw[black] (1,\g) -- node [label={[shift={(0,-1.1)}]$p/d$}] {} ++ (0,-0.7) ;
\draw[black] (2,\g) -- node [label={[shift={(0,-1.1)}]$p/d$}] {} ++ (0,-0.7) ;
\draw[black] (4,\g) -- node [label={[shift={(0,-1.1)}]$p/d$}] {} ++ (0,-0.7) ;
\draw[black] (0,\g) -- node [label={[shift={(-0.3,-0.25)}]$m$}] {} ++ (0,-1) ; 
\draw[black] (0,\g-1) -- node [label={[shift={(0,-1)}]$d$}] {} ++ (0,-0.7) ;
\node[draw,shape=circle, fill=white, inner sep = 2pt] at (0,\g-1){$\mathcal{Y}$};
\draw[black] (0,\g) -- node [label={[shift={(0,0)}]$1$}] {} ++ (1,0);
\draw[black] (1,\g) -- node [label={[shift={(0,0)}]$r_1$}] {} ++ (1,0);
\draw[black] (2,\g) -- node [] {} ++ (0.66,0) ;
\draw[black, dotted] (2.66,\g) -- node [] {} ++ (0.66,0) ;
\draw[black] (3.33,\g) -- node [] {} ++ (0.66,0) ;
\draw[black, rounded corners=0.1cm] (0,\g) -- node [label={[shift={(0,-0.1)}]$r_{d/p}$}] {} ++ (-1,0) -- ++ (0,1.5) -- ++ (6,0) -- ++ (0,-1.5) -- node [label={[shift={(0,-0.1)}]$r_{d/p}$}] {} ++ (-1,0); 
\node[draw,shape=semicircle,rotate=135 ,fill=white, anchor=south,inner sep=2pt, outer sep=0pt, scale=0.75] at (1,\g){}; 
\node[draw,shape=semicircle,rotate=315 ,fill=YellowOrange, anchor=south,inner sep=2pt, outer sep=0pt, scale=0.75] at (1,\g){};
\node[draw,shape=semicircle,rotate=135 ,fill=white, anchor=south,inner sep=2pt, outer sep=0pt, scale=0.75] at (2,\g){}; 
\node[draw,shape=semicircle,rotate=315 ,fill=YellowOrange, anchor=south,inner sep=2pt, outer sep=0pt, scale=0.75] at (2,\g){};
\node[draw,shape=semicircle,rotate=135 ,fill=white, anchor=south,inner sep=2pt, outer sep=0pt, scale=0.75] at (4,\g){}; 
\node[draw,shape=semicircle,rotate=315 ,fill=YellowOrange, anchor=south,inner sep=2pt, outer sep=0pt, scale=0.75] at (4,\g){};
\node[draw,shape=semicircle,rotate=45  ,fill=cyan, anchor=south,inner sep=2pt, outer sep=0pt, scale=0.75] at (0,\g){}; 
\node[draw,shape=semicircle,rotate=225 ,fill=white, anchor=south,inner sep=2pt, outer sep=0pt, scale=0.75] at (0,\g){};
\node[draw,shape=circle,fill=Gray, scale=0.5] at (2,\g+1.5){}; 
\node[] at (2,\g+1.5-0.4){$\Sigma^{-1}$}; 

\def\g{-3}
\def\f{-2}
\node[anchor=east] at (-1.7+\f,\g-0.2){$\mathbf{\Xi}^{\phantom{T}}$}; 
\node[anchor=east] at (-0.85+\f,\g-0.3){$=$}; 
\draw[black] (0+\f,\g) -- node [label={[shift={(0,-1.1)}]$p/d$}] {} ++ (0,-0.7) ;
\draw[black] (1+\f,\g) -- node [label={[shift={(0,-1.1)}]$p/d$}] {} ++ (0,-0.7) ;
\draw[black] (3+\f,\g) -- node [label={[shift={(0,-1.1)}]$p/d$}] {} ++ (0,-0.7) ;
\draw[black] (6+\f,\g) -- node [label={[shift={(0,-1)}]$d$}] {} ++ (0,-0.7) ;
\draw[black] (0+\f,\g) -- node [label={[shift={(0,0)}]$r_1$}] {} ++ (1,0);
\draw[black] (3+\f,\g) -- node [label={[shift={(0,-0.1)}]$r_{d/p}$}] {} ++ (1,0);
\draw[black] (1+\f,\g) -- node [] {} ++ (0.66,0) ;
\draw[black, dotted] (1.66+\f,\g) -- node [] {} ++ (0.66,0) ;
\draw[black] (2.33+\f,\g) -- node [] {} ++ (0.66,0) ;
\node[draw,rounded corners, fill=white, inner sep = 4pt] at (6+\f,\g){\raisebox{-1em}{$\Sigma^{-1} \cdot \mathbf{\Psi}_{\textrm{cm}/\textrm{fm}}^{(d/p +1)}(\mathcal{X}) \cdot \mathcal{Y}^T$}};
\node[draw,shape=semicircle,rotate=135 ,fill=white, anchor=south,inner sep=2pt, outer sep=0pt, scale=0.75] at (0+\f,\g){}; 
\node[draw,shape=semicircle,rotate=315 ,fill=YellowOrange, anchor=south,inner sep=2pt, outer sep=0pt, scale=0.75] at (0+\f,\g){};
\node[draw,shape=semicircle,rotate=135 ,fill=white, anchor=south,inner sep=2pt, outer sep=0pt, scale=0.75] at (1+\f,\g){}; 
\node[draw,shape=semicircle,rotate=315 ,fill=YellowOrange, anchor=south,inner sep=2pt, outer sep=0pt, scale=0.75] at (1+\f,\g){};
\node[draw,shape=semicircle,rotate=135 ,fill=white, anchor=south,inner sep=2pt, outer sep=0pt, scale=0.75] at (3+\f,\g){}; 
\node[draw,shape=semicircle,rotate=315 ,fill=YellowOrange, anchor=south,inner sep=2pt, outer sep=0pt, scale=0.75] at (3+\f,\g){};
\end{tikzpicture}
\caption[]{Construction of the coefficient tensor $\mathbf{\Xi}$: After the left-orthonormalization of the first $d$ or $p$ cores, respectively, of $\mathbf{\Psi}_{\textrm{cm}/\textrm{fm}} (\mathcal{X})$, the tensor $\mathbf{\Xi}$ can be obtained by contracting the pseudoinverse with the data matrix $\mathcal{Y}$. Since $\mathbf{\Xi}^T$ is a cyclic tensor train, the tensor $\mathbf{\Xi}$ can be expressed as a standard tensor train where we only replace the last core.}
\label{fig: contraction}
\end{figure}

Storing the TT cores of \eqref{eq: TT coordinate major} and \eqref{eq: TT function major} in a sparse format, the memory consumption of both representations can be estimated as $O(p \cdot d \cdot m)$. The memory consumption of the corresponding matricizations would be $O(p^d \cdot m)$ and $O(d^p \cdot m)$, respectively. This is the first main advantage of the proposed decompositions: If the number of snapshots is not too large ($m \ll p^d$ or $m \ll d^p$), we are able to efficiently store all entries of the matrices \eqref{eq: matrix coordinate major} and \eqref{eq: matrix function major} in the TT decompositions \eqref{eq: TT coordinate major} and \eqref{eq: TT function major}, respectively. The second advantage is then the fast computation of the pseudoinverse. The computational effort needed to compute an SVD of the matrices \eqref{eq: matrix coordinate major} and \eqref{eq: matrix function major} and to construct the pseudoinverse given in \eqref{eq: pseudoinverse in full format} can be estimated as $O(p^d \cdot m^2)$ and $O(d^p \cdot m^2)$, respectively. In contrast to that, we only need $O(p \cdot d \cdot m^3)$ steps to compute the pseudoinverse of the TT representations \eqref{eq: TT coordinate major} and \eqref{eq: TT function major}, see Lemma \ref{lemma: pseudoinverse - effort}. On the other hand, if $m \gtrsim p^d$ (or $m \gtrsim d^p$), then the tensor-train based approach is computationally more expensive than the classical methods. However, we still benefit from the reduced storage consumption of the basis tensor using MANDy.

\begin{remark}[Right-orthonormality of the last core]
Note that it is not necessary to right-orthonormalize the last core of $\mathbf{\Psi}_{\textrm{cm}/\textrm{fm}} (\mathcal{X})$ for the construction of its pseudoinverse as shown in Algorithm \ref{alg:Pseudoinverse} since the last core is already right-orthonormal by construction, cf.~\eqref{eq: TT coordinate major} and~\eqref{eq: TT function major}.
\end{remark}

\begin{remark}[Special case of monomial basis functions]
For the special case of monomial basis functions, i.e., for a dictionary given by $\mathbb{D} = \{\psi_1, \dots, \psi_p\} = \{1, x, x^2, \dots , x^{p-1}\}$ with $p = 2^q$, we can decompose the components of the coordinate-major rank-one tensor given in \eqref{eq: psi f,x} even further by writing
\begin{equation*}
\begin{bmatrix}
    \psi_1(x_i) \\ \psi_2(x_i) \\ \vdots \\ \psi_p(x_i) 
\end{bmatrix} =
\begin{bmatrix}
    1 \\ x_i \\ \vdots \\ x_i^{2 ^q -1}
\end{bmatrix} \hat{=}
\begin{bmatrix}
    1 \\ x_i 
\end{bmatrix} \otimes
\begin{bmatrix}
    1 \\ x_i^2 
\end{bmatrix} \otimes
\begin{bmatrix}
    1 \\ x_i^4 
\end{bmatrix} \otimes \dots \otimes
\begin{bmatrix}
    1 \\ x_i^{2^{q-1}} 
\end{bmatrix},
\end{equation*}
which can be easily shown using $\sum_{l=0}^{q-1} 2^l = 2^{q}-1$. We will not make use of this kind of sub-decomposition in this work. Nevertheless, it may be advantageous for the identification of governing equations involving higher-order monomials.
\end{remark}

\begin{remark}[Constraints on the coefficients] Even when using the TT format for the construction of the tensors $\mathbf{\Psi}_\textrm{cm}(\mathcal{X})$ and $\mathbf{\Psi}_\textrm{fm}(\mathcal{X})$, respectively, it is possible to directly include linear (dimensionwise) constraints on the coefficients stored in $\mathbf{\Xi}$. A tensor train that encodes the extra conditions on the coefficients can be seen as an additional snapshot. Together with a corresponding vector appended to $\mathcal{Y}$, we can incorporate the (weighted) contraints.
\end{remark}

\section{Numerical results}\label{sec: results}

In this section, we will present three numerical examples for the application of MANDy, namely Chua's circuit (which was already introduced in Example \ref{ex:SINDy examples}), the Fermi--Pasta--Ulam--Tsingou problem~\cite{FERMI1955}, and the Kuramoto model~\cite{ACEBRON2005}. The numerical experiments have been performed on a Linux machine with 128 GB RAM and an Intel Xeon processor with a clock speed of 3 GHz and 8 cores. The algorithms have been implemented in Python~3.6 and collected in the toolbox Scikit-TT available on GitHub: \url{https://github.com/PGelss/scikit_tt}.

\subsection{Chua's circuit}\label{sec: chua}

As a first experiment, let us consider our example from Section \ref{sec: recovery} again with the intent to illustrate how using tensor products of simple functions allows for the construction of rich sets of basis functions. The first set of basis functions \eqref{chua: first try} used in Example \ref{ex:SINDy examples} corresponds to the coordinate-major rank-one decomposition
\begin{equation*}
    \mathbf{\Psi}_1(X) =
        \begin{bmatrix} 1 \\ x_1 \\ \, x_1^2 \, \end{bmatrix}
        \otimes
        \begin{bmatrix} 1 \\ x_2 \\ \, x_2^2 \, \end{bmatrix}
        \otimes
        \begin{bmatrix} 1 \\ x_3 \\ \, x_3^2 \, \end{bmatrix}
        \in \R^{3 \times 3 \times 3}.
\end{equation*}
As we have seen, these ansatz functions do not lead to a correct recovery of the system dynamics. Instead we have to use a basis set including the absolute values of the coordinates. The basis vector given in \eqref{chua: second try} then corresponds to the function-major rank-one decomposition
\begin{equation}\label{eq: chua function-major}
    \mathbf{\Psi}_2(X) =
        \begin{bmatrix} 1 \\ x_1 \\ x_2 \\ \, x_3 \, \end{bmatrix}
        \otimes
        \begin{bmatrix} 1 \\ \abs{x_1} \\ \abs{x_2} \\ \, \abs{x_3} \, \end{bmatrix}
        \in \R^{4 \times 4}.
\end{equation}
As for the matrix-based approach in Example \ref{ex:SINDy examples}, the approximate coefficient tensor is numerically equal to the exact coefficient tensor (see Appendix \ref{app: chua}) when we apply MANDy with the basis decomposition $\mathbf{\Psi}_2$ and choose the same parameters and number of snapshots. This small example already shows the advantage of using tensor decompositions in terms of storage consumption. For the sparse TT representation of the basis tensor (see \eqref{eq: psi x,f} and \eqref{eq: TT function major}), we only have to store $18000$ entries whereas the matricized counterpart would have $32000$ entries.

\subsection{Fermi--Pasta--Ulam--Tsingou problem}

Let us now consider the Fermi--Pasta--Ulam--Tsingou model, which was introduced by the eponymous physicists and mathematicians in 1953. The underlying model represents a vibrating string by a system of coupled oscillators fixed at the respective end points of the string.

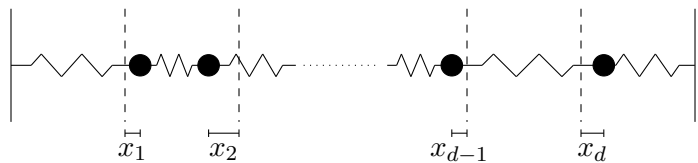
\begin{figure}[htbp]
\centering
\begin{tikzpicture}
\def\d{1.5}
\def\h{1.5}
\def\y{0}
\draw[] (5-3*\d,\y+0.5*\h) -- ++ (0,-\h);
\draw[dashed] (5-2*\d,\y+0.5*\h) -- ++ (0,-\h);
\draw[dashed] (5-1*\d,\y+0.5*\h) -- ++ (0,-\h);
\draw[dashed] (5+1*\d,\y+0.5*\h) -- ++ (0,-\h);
\draw[dashed] (5+2*\d,\y+0.5*\h) -- ++ (0,-\h);
\draw[] (5+3*\d,\y+0.5*\h) -- ++ (0,-\h);
\node[shape=circle,inner sep=0.1cm, outer sep=0, draw, fill=black] at (5-2*\d+0.2,\y) {};
\node[shape=circle,inner sep=0.1cm, outer sep=0, draw, fill=black] at (5-1*\d-0.4,\y) {};
\node[shape=circle,inner sep=0.1cm, outer sep=0, draw, fill=black] at (5+1*\d-0.2,\y) {};
\node[shape=circle,inner sep=0.1cm, outer sep=0, draw, fill=black] at (5+2*\d+0.3,\y) {};

\FPeval{\xa}{5-3*\d}
\FPeval{\xb}{5-2*\d+0.2-0.1}
\draw[] (\xa,\y) -- ++ (0.166*\xb-0.166*\xa,0);
\draw[] (\xa+0.166*\xb-0.166*\xa,\y) -- ++ (0.0833*\xb-0.0833*\xa,0.1*\h);
\draw[] (\xa+0.166*\xb-0.166*\xa+0.0833*\xb-0.0833*\xa,\y+0.1*\h) -- ++ (0.166*\xb-0.166*\xa,-0.2*\h);
\draw[] (\xa+0.333*\xb-0.333*\xa+0.0833*\xb-0.0833*\xa,\y-0.1*\h) -- ++ (0.166*\xb-0.166*\xa,0.2*\h);
\draw[] (\xa+0.5*\xb-0.5*\xa+0.0833*\xb-0.0833*\xa,\y+0.1*\h) -- ++ (0.166*\xb-0.166*\xa,-0.2*\h);
\draw[] (\xa+0.666*\xb-0.666*\xa+0.0833*\xb-0.0833*\xa,\y-0.1*\h) -- ++ (0.0833*\xb-0.0833*\xa,0.1*\h);
\draw[] (\xa+0.666*\xb-0.666*\xa+0.166*\xb-0.166*\xa,\y) -- ++ (0.166*\xb-0.166*\xa,0);

\FPeval{\xa}{5-2*\d+0.2+0.1}
\FPeval{\xb}{5-1*\d-0.4-0.1}
\draw[] (\xa,\y) -- ++ (0.166*\xb-0.166*\xa,0);
\draw[] (\xa+0.166*\xb-0.166*\xa,\y) -- ++ (0.0833*\xb-0.0833*\xa,0.1*\h);
\draw[] (\xa+0.166*\xb-0.166*\xa+0.0833*\xb-0.0833*\xa,\y+0.1*\h) -- ++ (0.166*\xb-0.166*\xa,-0.2*\h);
\draw[] (\xa+0.333*\xb-0.333*\xa+0.0833*\xb-0.0833*\xa,\y-0.1*\h) -- ++ (0.166*\xb-0.166*\xa,0.2*\h);
\draw[] (\xa+0.5*\xb-0.5*\xa+0.0833*\xb-0.0833*\xa,\y+0.1*\h) -- ++ (0.166*\xb-0.166*\xa,-0.2*\h);
\draw[] (\xa+0.666*\xb-0.666*\xa+0.0833*\xb-0.0833*\xa,\y-0.1*\h) -- ++ (0.0833*\xb-0.0833*\xa,0.1*\h);
\draw[] (\xa+0.666*\xb-0.666*\xa+0.166*\xb-0.166*\xa,\y) -- ++ (0.166*\xb-0.166*\xa,0);

\FPeval{\xa}{5-1*\d-0.4+0.1}
\FPeval{\xb}{5-1*\d-0.2+0.1+0.85}
\draw[] (\xa,\y) -- ++ (0.166*\xb-0.166*\xa,0);
\draw[] (\xa+0.166*\xb-0.166*\xa,\y) -- ++ (0.0833*\xb-0.0833*\xa,0.1*\h);
\draw[] (\xa+0.166*\xb-0.166*\xa+0.0833*\xb-0.0833*\xa,\y+0.1*\h) -- ++ (0.166*\xb-0.166*\xa,-0.2*\h);
\draw[] (\xa+0.333*\xb-0.333*\xa+0.0833*\xb-0.0833*\xa,\y-0.1*\h) -- ++ (0.166*\xb-0.166*\xa,0.2*\h);
\draw[] (\xa+0.5*\xb-0.5*\xa+0.0833*\xb-0.0833*\xa,\y+0.1*\h) -- ++ (0.166*\xb-0.166*\xa,-0.2*\h);
\draw[] (\xa+0.666*\xb-0.666*\xa+0.0833*\xb-0.0833*\xa,\y-0.1*\h) -- ++ (0.0833*\xb-0.0833*\xa,0.1*\h);
\draw[] (\xa+0.666*\xb-0.666*\xa+0.166*\xb-0.166*\xa,\y) -- ++ (0.166*\xb-0.166*\xa,0);

\draw[dotted] (5-1*\d-0.2+0.1+0.85+0.1,\y) -- (5+1*\d-0.1-0.1-0.85-0.1,\y);

\FPeval{\xa}{5+1*\d-0.1-0.1-0.85}
\FPeval{\xb}{5+1*\d-0.2-0.1}
\draw[] (\xa,\y) -- ++ (0.166*\xb-0.166*\xa,0);
\draw[] (\xa+0.166*\xb-0.166*\xa,\y) -- ++ (0.0833*\xb-0.0833*\xa,0.1*\h);
\draw[] (\xa+0.166*\xb-0.166*\xa+0.0833*\xb-0.0833*\xa,\y+0.1*\h) -- ++ (0.166*\xb-0.166*\xa,-0.2*\h);
\draw[] (\xa+0.333*\xb-0.333*\xa+0.0833*\xb-0.0833*\xa,\y-0.1*\h) -- ++ (0.166*\xb-0.166*\xa,0.2*\h);
\draw[] (\xa+0.5*\xb-0.5*\xa+0.0833*\xb-0.0833*\xa,\y+0.1*\h) -- ++ (0.166*\xb-0.166*\xa,-0.2*\h);
\draw[] (\xa+0.666*\xb-0.666*\xa+0.0833*\xb-0.0833*\xa,\y-0.1*\h) -- ++ (0.0833*\xb-0.0833*\xa,0.1*\h);
\draw[] (\xa+0.666*\xb-0.666*\xa+0.166*\xb-0.166*\xa,\y) -- ++ (0.166*\xb-0.166*\xa,0);

\FPeval{\xa}{5+1*\d-0.2+0.1}
\FPeval{\xb}{5+2*\d+0.3-0.1}
\draw[] (\xa,\y) -- ++ (0.166*\xb-0.166*\xa,0);
\draw[] (\xa+0.166*\xb-0.166*\xa,\y) -- ++ (0.0833*\xb-0.0833*\xa,0.1*\h);
\draw[] (\xa+0.166*\xb-0.166*\xa+0.0833*\xb-0.0833*\xa,\y+0.1*\h) -- ++ (0.166*\xb-0.166*\xa,-0.2*\h);
\draw[] (\xa+0.333*\xb-0.333*\xa+0.0833*\xb-0.0833*\xa,\y-0.1*\h) -- ++ (0.166*\xb-0.166*\xa,0.2*\h);
\draw[] (\xa+0.5*\xb-0.5*\xa+0.0833*\xb-0.0833*\xa,\y+0.1*\h) -- ++ (0.166*\xb-0.166*\xa,-0.2*\h);
\draw[] (\xa+0.666*\xb-0.666*\xa+0.0833*\xb-0.0833*\xa,\y-0.1*\h) -- ++ (0.0833*\xb-0.0833*\xa,0.1*\h);
\draw[] (\xa+0.666*\xb-0.666*\xa+0.166*\xb-0.166*\xa,\y) -- ++ (0.166*\xb-0.166*\xa,0);

\FPeval{\xa}{5+2*\d+0.15+0.1}
\FPeval{\xb}{5+3*\d}
\draw[] (\xa,\y) -- ++ (0.166*\xb-0.166*\xa,0);
\draw[] (\xa+0.166*\xb-0.166*\xa,\y) -- ++ (0.0833*\xb-0.0833*\xa,0.1*\h);
\draw[] (\xa+0.166*\xb-0.166*\xa+0.0833*\xb-0.0833*\xa,\y+0.1*\h) -- ++ (0.166*\xb-0.166*\xa,-0.2*\h);
\draw[] (\xa+0.333*\xb-0.333*\xa+0.0833*\xb-0.0833*\xa,\y-0.1*\h) -- ++ (0.166*\xb-0.166*\xa,0.2*\h);
\draw[] (\xa+0.5*\xb-0.5*\xa+0.0833*\xb-0.0833*\xa,\y+0.1*\h) -- ++ (0.166*\xb-0.166*\xa,-0.2*\h);
\draw[] (\xa+0.666*\xb-0.666*\xa+0.0833*\xb-0.0833*\xa,\y-0.1*\h) -- ++ (0.0833*\xb-0.0833*\xa,0.1*\h);
\draw[] (\xa+0.666*\xb-0.666*\xa+0.166*\xb-0.166*\xa,\y) -- ++ (0.166*\xb-0.166*\xa,0);

\draw[] (5-2*\d,\y-0.5*\h-0.15) -- (5-2*\d+0.2,\y-0.5*\h-0.15) node[below, midway] {$x_1$};
\draw[] (5-2*\d,\y-0.5*\h-0.15+0.05) -- ++ (0,-0.1);
\draw[] (5-2*\d+0.2,\y-0.5*\h-0.15+0.05) -- ++ (0,-0.1);

\draw[] (5-1*\d,\y-0.5*\h-0.15) -- (5-1*\d-0.4,\y-0.5*\h-0.15) node[below, midway] {$x_2$};
\draw[] (5-1*\d,\y-0.5*\h-0.15+0.05) -- ++ (0,-0.1);
\draw[] (5-1*\d-0.4,\y-0.5*\h-0.15+0.05) -- ++ (0,-0.1);

\draw[] (5+1*\d,\y-0.5*\h-0.15) -- (5+1*\d-0.2,\y-0.5*\h-0.15) node[below, midway] {$x_{d-1}$};
\draw[] (5+1*\d,\y-0.5*\h-0.15+0.05) -- ++ (0,-0.1);
\draw[] (5+1*\d-0.2,\y-0.5*\h-0.15+0.05) -- ++ (0,-0.1);

\draw[] (5+2*\d,\y-0.5*\h-0.15) -- (5+2*\d+0.3,\y-0.5*\h-0.15) node[below, midway] {$x_d$};
\draw[] (5+2*\d,\y-0.5*\h-0.15+0.05) -- ++ (0,-0.1);
\draw[] (5+2*\d+0.3,\y-0.5*\h-0.15+0.05) -- ++ (0,-0.1);
\end{tikzpicture}
\caption{Fermi--Pasta--Ulam--Tsingou model: Representation of a vibrating string by a set of masses coupled by springs.}
\label{fig: fpu model}
\end{figure}

Here, we consider a dynamical system described by a second-order differential equation $\ddot{X}(t) = F(X(t))$ where the right-hand side does not depend on $\dot{X}$. A very large number of problems in astronomy, molecular dynamics, and other areas of physics are of this form, as this type of differential equation is an immediate consequence of Newtonian mechanical laws. Moreover, it makes no difference for our methods whether we have given data measurements $[ X_1, \dots, X_m]$ and $[ \dot{X}_1, \dots, \dot{X}_m]$ or $[ X_1, \dots, X_m]$ and $[ \ddot{X}_1, \dots, \ddot{X}_m]$. Let us consider the model variant with cubic forcing terms, which is of the form
\begin{equation}\label{eq: FPU}
    \ddot{x}_i = (x_{i+1} - 2 x_i + x_{i-1}) + \beta \left[(x_{i+1} - x_i)^3 - (x_i - x_{i-1})^3 \right],
\end{equation}
$ i = 1, \dots, d $, where $ x_i $ represents the displacement of the $ i $th oscillator from its original position, see Figure~\ref{fig: fpu model}. We assume the ends of the chain to be fixed, i.e., $ x_0 = x_{d+1} = 0 $. The parameter $ \beta \in \mathbb{R} $ represents the nonlinear force between the oscillators.

For our first numerical experiment, we consider $d=10$ oscillators. We compare our proposed method with the matrix-based solution of the least-squares problem given in \eqref{eq: min problem in TT}. For this purpose, we choose random displacements in $[-0.1, 0.1]$ for each oscillator and compute the (exact) second derivatives using \eqref{eq: FPU} with $\beta = 0.7$. As basis functions we choose $\mathbb{D} = \{1, x, x^2, x^3\}$. This then results in the representation of $4^{10}$ combinations of function evaluations for every snapshot if we use the coordinate-major ansatz, see \eqref{eq: psi f,x}, such that
\begin{equation*}
  \mathbf{\Psi}_\textrm{cm}(\mathcal{X}) \in \mathbb{R}^{4 \times \dots \times 4 \times m} \quad \textrm{for} \quad \mathcal{X} =
    \begin{bmatrix}
        X_1 & X_2 & \dots & X_m
    \end{bmatrix} \in \mathbb{R}^{10 \times m},
\end{equation*}
where $m$ is the number of considered snapshots.

\begin{figure}[htbp]
    \centering
    \begin{minipage}{0.49\textwidth}
        \centering
        \subfiguretitle{(a)}
        \includegraphics[height=140px]{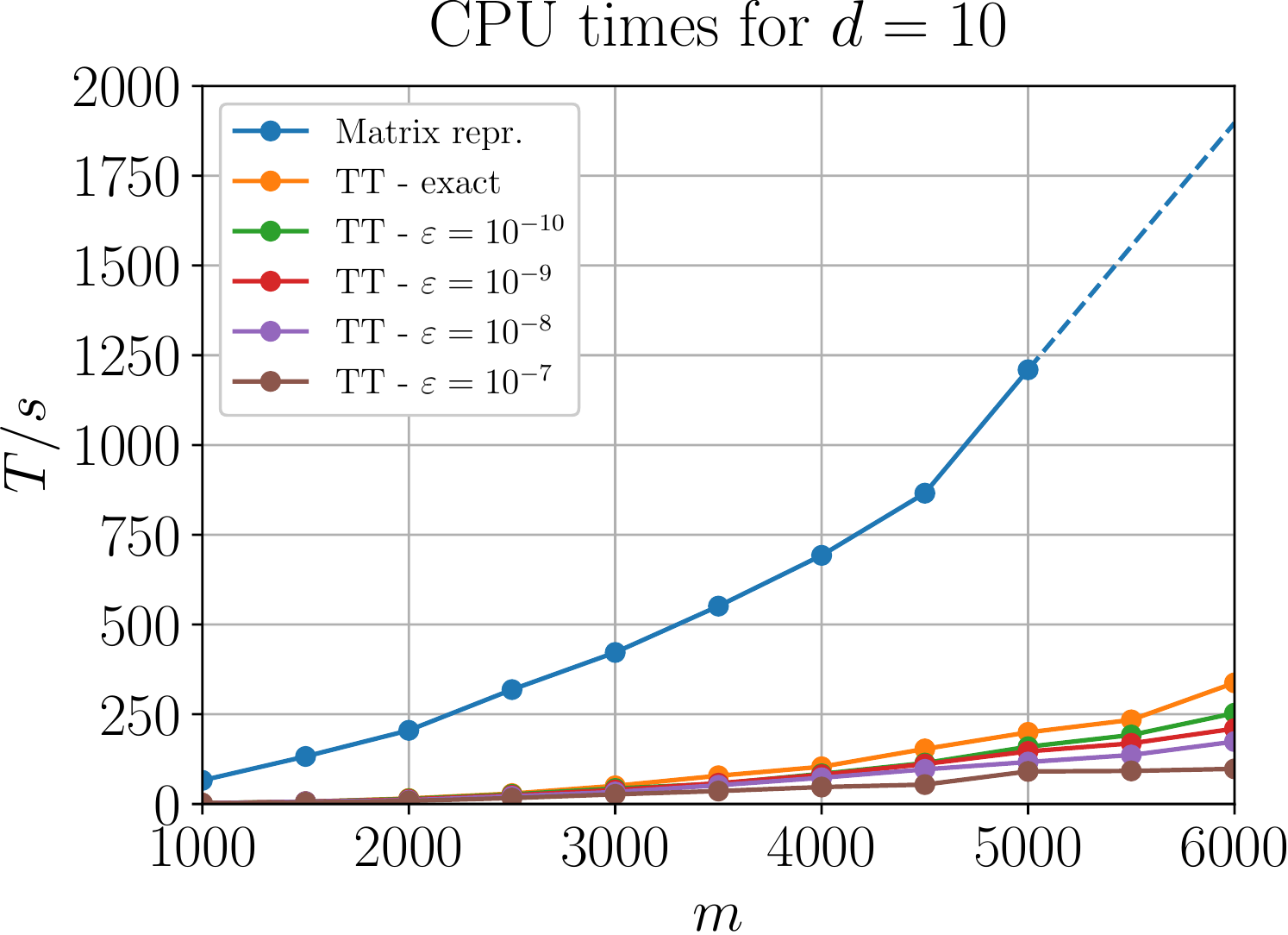}
    \end{minipage}
    \hfill
    \begin{minipage}{0.49\textwidth}
        \centering
        \subfiguretitle{(b)}
        \includegraphics[height=140px]{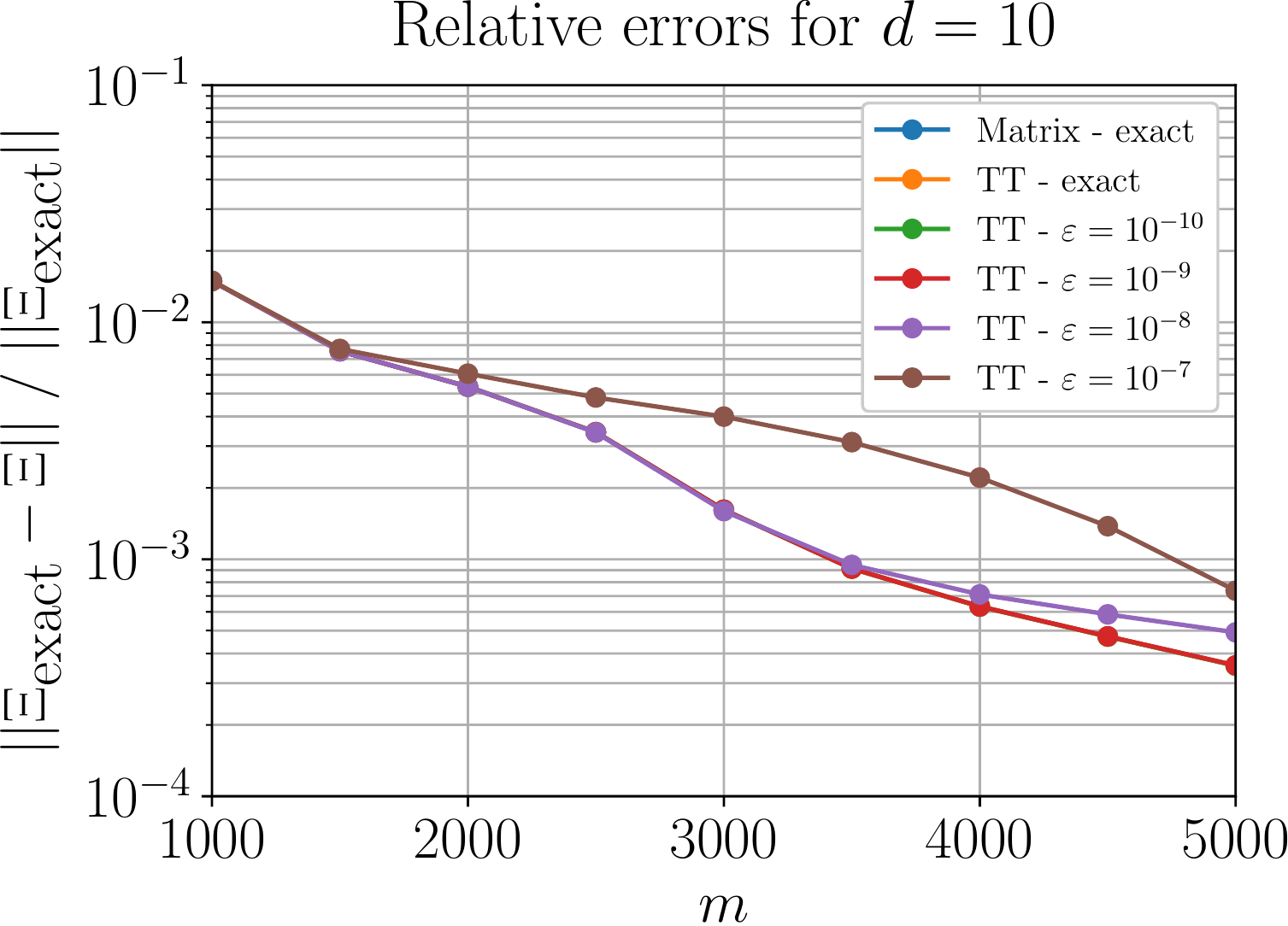}
    \end{minipage}
    \caption{Application of MANDy to the Fermi--Pasta--Ulam--Tsingou problem: (a) CPU times needed for the computation of the coefficient tensor $\mathbf{\Xi}$ and the matricized counterpart $\Xi$, respectively. For $m = 6000$, the CPU time of the matrix-based approach is extrapolated since storing the matrices for calculations with $m > 5000$ would require too much memory. (b) Relative errors between the approximate solutions and the exact solution. Results are shown for the tensor-based approach (using different thresholds for orthonormalizations) as well as for the matrix-based approach. For $\varepsilon \leq 10^{-9}$, the relative errors of MANDy are numerically indistinguishable from those of the SINDy-like approach.}
    \label{fig: fpu 1}
\end{figure}

Figure~\ref{fig: fpu 1} shows the CPU times and relative errors of MANDy for varying $m$. We see that we are able to reduce the time needed for computing the coefficient tensor/matrix significantly -- at least within the considered range of snapshot numbers with $m \ll 4^{10}$. For MANDy, we also included the construction of the tensor train $\mathbf{\Psi}_\textrm{cm}(\mathcal{X})$ in the CPU times, whereas the runtimes for the matrix-based approach only consist of the times needed to solve the minimization problem. Without using truncated SVDs for the orthonormalization procedures, we obtain a speed-up of approximately up to five. The runtimes decrease even further when we set a threshold $\varepsilon > 0$ for the SVDs.

\begin{figure}[htb]
    \centering
    \begin{minipage}{0.95\textwidth}
        \centering
        \includegraphics[height=140px]{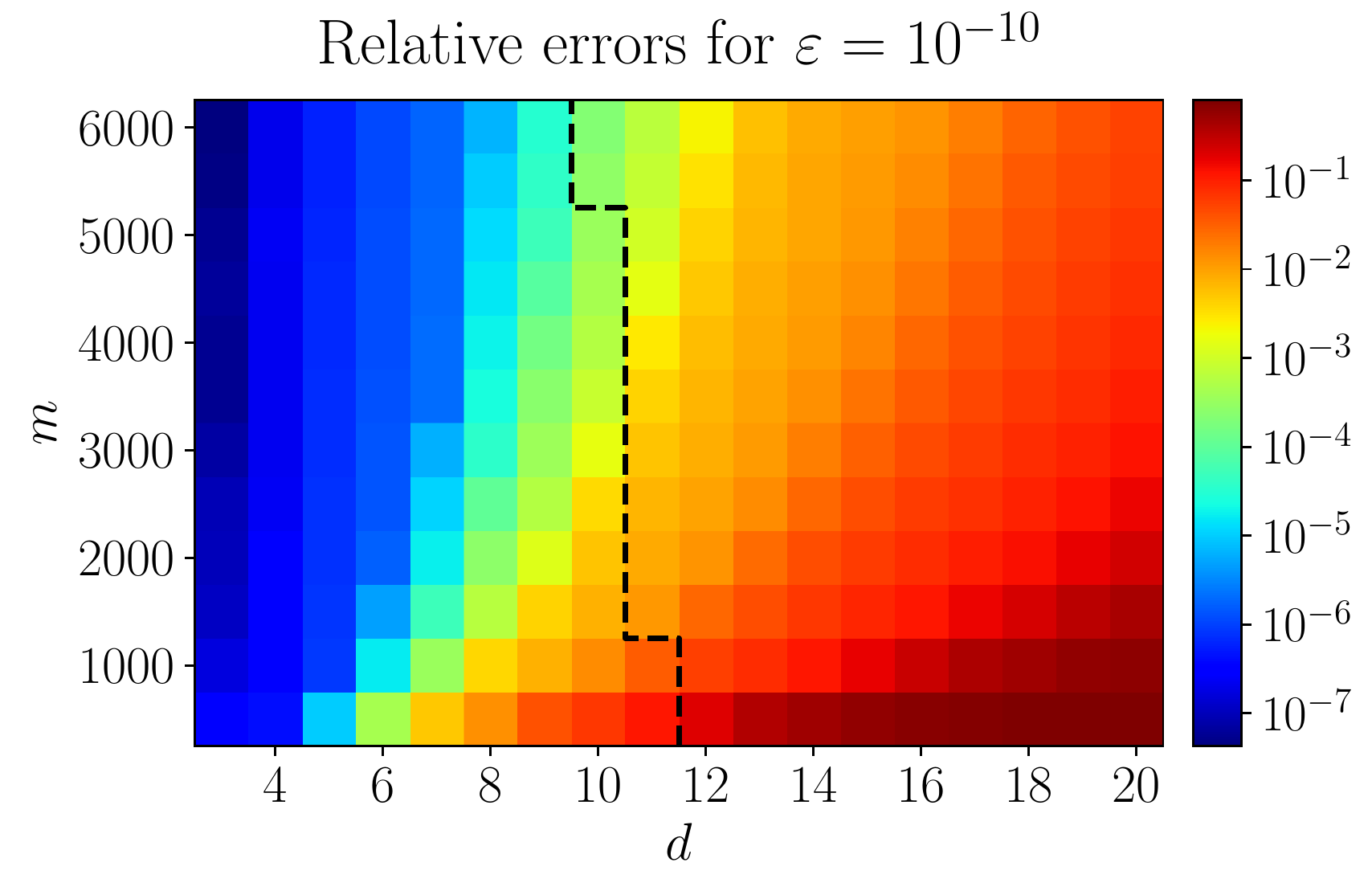}
    \end{minipage}
    \caption{Relative errors for different parameters: The relative errors between the approximate and the exact solutions for the coefficient tensor depending on $d$ and $m$ are shown. All parameter combinations for $m$ and $d$ to the right of the dashed line could only be handled using the tensor-based method MANDy.}
    \label{fig: fpu 2}
\end{figure}

At the same time, we obtain nearly the same relative errors between the approximate and the exact solution, cf.~Appendix~\ref{app: FPU}. As shown in Figure \ref{fig: fpu 1} (b), there is in fact no noticeable difference between the error produced by the matrix-based approach and the tensor-based approach with thresholds of $\varepsilon \leq 10^{-9}$. Moreover, we can consider higher numbers of snapshots and dimensions using MANDy. This is shown more clearly in Figure~\ref{fig: fpu 2}, where we plot the relative errors in dependence of $m$ and $d$. In particular, we were not able to compute a matrix-based solution using standard SINDy for $d \geq 12$ since the basis matrix $\Psi_\textrm{cm}(\mathcal{X})$ simply becomes too large. Using MANDy, we can even approximate the solution with a relative error smaller than $10^{-1}$ for $d = 20$ and $m = 6000$ (which implies a basis tensor $\mathbf{\Psi}_\textrm{cm}(\mathcal{X})$ with $4^{20} \cdot 6000 \approx 6.6 \cdot 10^{15}$ elements).

\subsection{Kuramoto model}

As a third example for the application of MANDy, we consider the Kuramoto model. The model describes the behavior of a large number of coupled oscillators on a circle. First introduced by Yoshiki Kuramoto in 1975, see~\cite{KURAMOTO1975}, it has been studied extensively over the last decades. 

\def\oscillator[#1,#2](#3){
  \FPeval{\xa}{2.25*cos(#3)+#1}
  \FPeval{\ya}{2.25*sin(#3)+#2}
  \draw[dashed] (#1,#2) -- (\xa,\ya);
  \node[shape=circle, draw=black, fill=black, inner sep=0.075cm] at (\xa,\ya) {};
}

\begin{figure}[htbp]
\centering
\begin{tikzpicture}
\def\x{0}
\def\y{0}

\draw[dotted] (\x,\y) -- ++ (2.25,0);
\draw[fill=Gray!20!white, draw=none] (\x,\y) ++ (14.3239:2.25) arc (14.3239:0:2.25) -- (\x,\y);
\node[] at (\x+1.85,\y+0.195) {$x_i$};
\draw[<-, > = latex] (\x,\y) ++ (14.3239+5:2.625) arc (14.3239+5:14.3239-5:2.625);
\node[] at (\x+2.85,\y+0.6375) {$\omega_i$};

\node[draw=black, shape=circle, inner sep=1.59cm] at (\x,\y) {~};

\oscillator[\x,\y](0.25)
\oscillator[\x,\y](0.45)
\oscillator[\x,\y](0.7)
\oscillator[\x,\y](0.88)
\oscillator[\x,\y](1)
\oscillator[\x,\y](1.2)
\oscillator[\x,\y](1.5708)
\oscillator[\x,\y](2)
\oscillator[\x,\y](2.5)
\oscillator[\x,\y](2.8)
\oscillator[\x,\y](3)
\oscillator[\x,\y](3.2)
\oscillator[\x,\y](3.66)
\oscillator[\x,\y](4)
\oscillator[\x,\y](4.4)
\oscillator[\x,\y](5)
\oscillator[\x,\y](5.33)
\oscillator[\x,\y](5.75)
\oscillator[\x,\y](6)
\end{tikzpicture}
\caption[Kuramoto model]{Kuramoto model: Simulation of coupled oscillators on a ring.}
\label{fig: kuramoto model}
\end{figure}
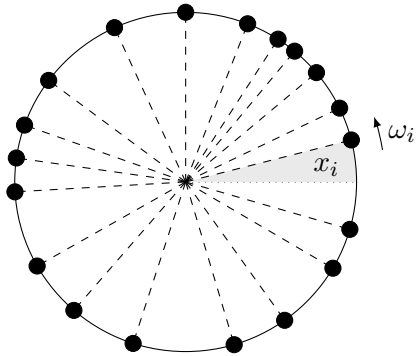

The governing equations -- including an external forcing, see~\cite{ACEBRON2005} -- can be written as
\begin{equation}\label{eq: kuramoto ODE}
    \frac{\mathrm{d} x_i}{\mathrm{d} t} = \omega_i + \frac{K}{d} \sum_{j=1}^d \sin(x_j - x_i) + h \sin(x_i),\quad i = 1, \dots, d, 
\end{equation}
where $ d $ is the number of oscillators, $ K $ the coupling strength, $h$ the external forcing parameter, $ \omega_i $ the $ i $th natural frequency, and $x_i$ the angular position of the $i$th oscillator, see Figure~\ref{fig: kuramoto model}. Here, we assume an identical coupling between all oscillators as well as a constant forcing parameter. In detail, we consider $d=100$ oscillators and set $K = 2$ and $h = 0.2$. Furthermore, we choose intrinsic frequencies equidistantly distributed in $[-5,5]$.

Now, we intend to identify the governing equations from simulation data. We choose the set of basis functions given by $\mathbb{D} =\{\sin(x), \cos(x)\}$ and use the function-major decomposition \eqref{eq: psi x,f} which leads to tensors of the form
\begin{equation}\label{eq: kuramoto decomposition}
    \mathbf{\Psi}_{\textrm{fm}}(X) = \begin{bmatrix} 1 \\ \sin(x_1) \\ \vdots \\ \sin(x_d) \end{bmatrix} \otimes \begin{bmatrix} 1 \\ \cos(x_1) \\ \vdots \\ \cos(x_d) \end{bmatrix} \in \mathbb{R}^{d +1 \times d+1}.
\end{equation}
Similar to the dictionary used for Chua's circuit, see Section \ref{sec: chua}, we added the basis function $1$ to both cores in order to ensure that also functions of the form $\sin(x_i)$ and $\cos(x_i)$ appear in the large basis set. Note that $ \sin(x \pm y) = \sin(x) \cos(y) \pm \cos (x) \sin(y)$ so that the system can indeed be represented by the chosen basis. We simulate the Kuramoto model from $ t_0 = 0 $ to $ t_1 = 1020 $ using an implementation of the BDF method as described in~\cite{SHAMPINE1997} and take $10$ snapshots within each second of simulation time, i.e., the time points corresponding to the snapshots are $\{0, 0.1 , 0.2, \dots , 1019.9, 1020\}$. That is, we consider $10201$ data points which would for the matrix case mean that the matrix $\Psi_\textrm{fm}(\mathcal{X})$ is square. As the initial distribution, we randomly place the oscillators on the ring. Then, based on the obtained data points $X_1 , \dots , X_{10201}$ and corresponding derivatives $\dot{X}_1 ,  \dots , \dot{X}_{10201}$, we recover the dynamics using MANDy with a threshold of $\varepsilon = 10^{-16}$. As in the previous examples, we are able to compare our result with the exact solution given in Appendix \ref{app: kuramoto}. We repeated the experiment several times and never observed relative errors larger than $10^{-4}$. Applying our recovered dynamics to another randomly chosen initial distribution of the oscillators, we see that we are able to approximate the dynamics of the Kuramoto model accurately within a time interval of nearly up to $100$ seconds, see Figure \ref{fig: kuramoto}. In terms of matrix representations, the reason for the inaccuracy is that the matrix $\Psi_\textrm{fm}(\mathcal{X})$ does not have full rank.

Even if the computational overhead is bigger using MANDy compared to the classical method, we are able to reduce the storage consumption by a factor of more than $50$. Here, a sparse representation of the tensor cores of $\mathbf{\Psi}_\textrm{fm}(\mathcal{X})$ enables us to significantly reduce the memory consumption.

\begin{figure}[htb]
    \centering
    \includegraphics[width=0.9\textwidth]{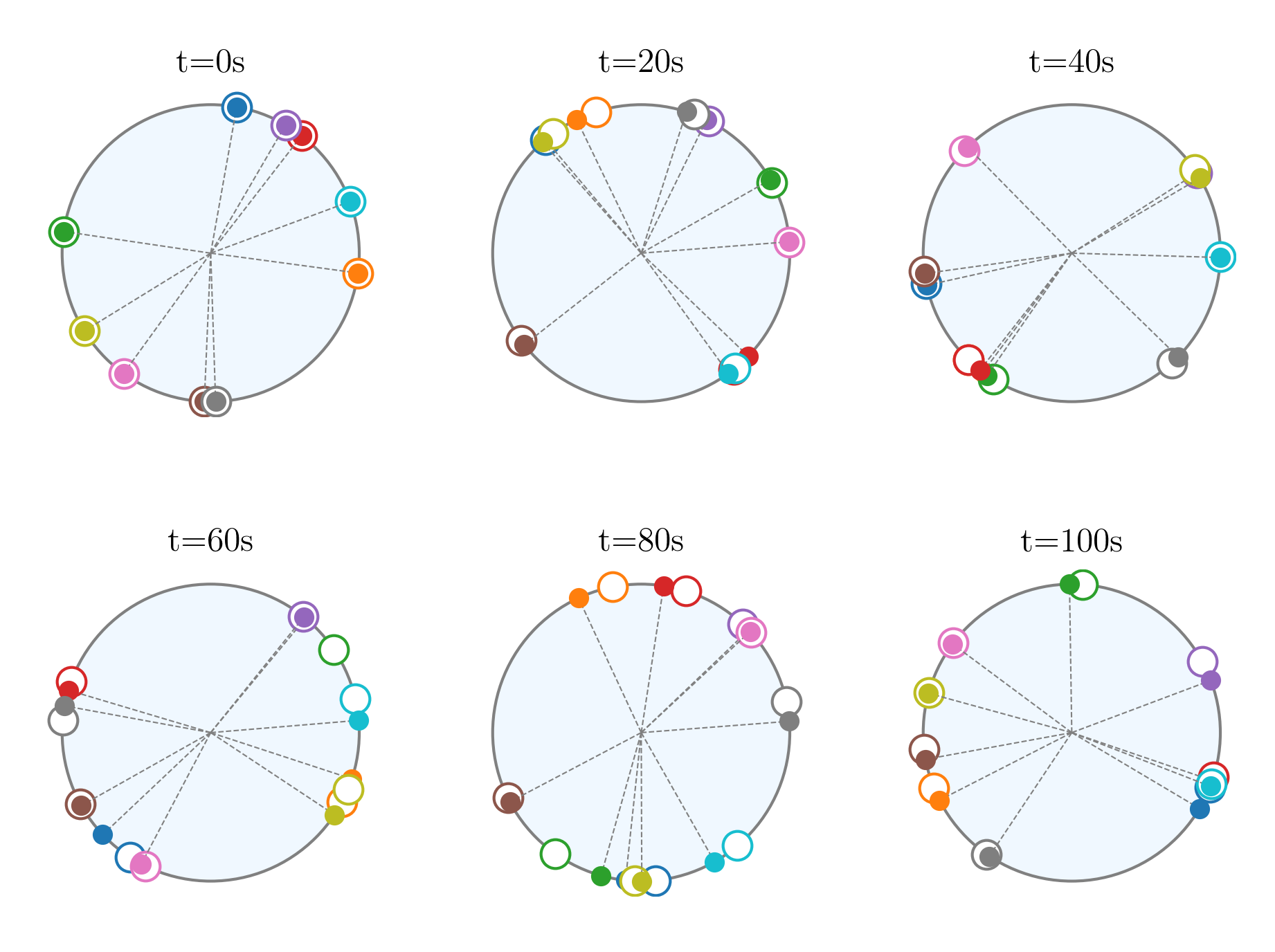}
    \caption[Application of MANDy to the Kuramoto model]{Application of MANDy to the Kuramoto model: The recovered dynamics are applied to another random initial distribution of the oscillators and then compared with the exact dynamics. Here, we only visualize the oscillators with indices $i = 1, 11 , \dots , 91$. The solid dots represent the positions according to the approximated dynamics whereas the circles with the same color represent the positions according to the real dynamics.}
    \label{fig: kuramoto}
\end{figure}

\section{Conclusion and outlook}\label{sec: conclusion}

\subsection{Conclusion} \label{sec: actualconclusion}

In this work, we have proposed an approach for the data-driven recovery of governing equations of nonlinear dynamical systems. The presented approach -- called MANDy, short for \emph{multidimensional approximation of nonlinear dynamical systems} -- combines data-driven methods with tensor-train decompositions. The aim of this approach is to reduce the memory consumption as well as the computational costs significantly and to mitigate the curse of dimensionality. After explaining the SINDy method for identifying governing equations based on a given basis set as well as the TT format, we have first shown how to use tensor products of simple functions for the construction of rich sets of basis functions. We have then explained how to rewrite the data-driven recovery in terms of tensor decompositions and the tensor-based computation of pseudoinverses. The results, which have been illustrated with several examples of dynamical systems, clearly demonstrate that the proposed algorithm needs less computational costs than the classical method if the number of data points is small enough. However, even if a large number of snapshots is needed for the recovery of the governing equations, we still benefit from the reduction of the memory consumption. The work presented in this paper constitutes a first step towards tensor-based techniques for the reconstruction of unknown systems purely from data measurements.

\subsection{Outlook} \label{sec: structure}

The investigations carried out in this work offer a number of further promising considerations. The above numerical experiments show convincingly that one can expect a significant improvement of MANDy over SINDy, not only concerning the memory requirements used in the recovery, but also in the computational effort. This advantage originates from exploiting the underlying data structure by using low-rank tensor decompositions. Key to this is an understanding of the correlations in the matrices that are approximated in the TT format. Appendix \ref{app: correlation} provides some details on the correlation structure required to obtain accurate TT approximations, requiring further research on the local correlation structure.

Other future research will include the investigation of perturbations of the data and their influence on the reconstructed systems. We have observed that MANDy (as well as methods like SINDy) is highly sensitive to noisy data, even if we only add a slight Gaussian noise to the measurements. In this context, the inclusion of approximate derivatives for the recovery will also be of interest. A common way of adding noise is to take data generated by $\dot{X}(t) = F(X(t))+\xi$, where $\xi\in  \mathbb{R}^d$  is a vector capturing errors, either drawn i.i.d.\ reflecting Gaussian noise, or generally with $\| \xi \|_F < \eta $ for some constant $\eta>0$. One might even hope for proving recovery guarantees as they are common in the context of compressed sensing \cite{CompressedSensingIntroRauhut,CompressedSensingIntroBoche}, given certain suitably structured data and noise levels.

Moreover, the construction of elaborate sets of basis functions has to be examined more closely in the future. Here, we used a priori knowledge for the construction of the dictionary, which is generally not known in practice. Additionally, parts of this work -- in particular the proposed basis decompositions -- may be used for the development of other tensor-based counterparts of methods such as EDMD or kernel EDMD. Accepting that the improved performance originates from exploiting the correlation structure, one can hope that other tensor networks are suitable for other kinds of data. Hierarchical tensor formats \cite{HACKBUSCH2009, ARNOLD2013,PhysRevB.90.125154} and entangled project pair states \cite{Orus-AnnPhys-2014,VerstraeteBig} may be suitable as well. The hope is that the present work stimulates such endeavors.

\section*{Acknowledgements}

This research has been partially funded by Deutsche Forschungsgemeinschaft (DFG) through grant CRC 1114 and EI 519/9-1, as well as the Templeton Foundation and the European Research Council (ERC).

{\small
\bibliographystyle{unsrt}
\bibliography{references}
}
\appendix

\section{Exact coefficient tensors}

For any pair of data matrices $\mathcal{X}, \mathcal{Y}$ related by one of the discussed ODE systems, the following tensor product representations of the coefficient tensors $\mathbf{\Xi}$ satisfy $\mathcal{Y} = \mathbf{\Xi}^T \mathbf{\Psi}(\mathcal{X})$ where $\mathbf{\Psi}(\mathcal{X})$ denotes the considered tensor of basis functions applied to the elements of $\mathcal{X}$.

\subsection{Exact solution for Chua's circuit}\label{app: chua}

For the chosen basis functions $\mathbb{D} = \{x, \left| x \right|\}$ and the function-major decomposition given in \eqref{eq: chua function-major}, an exact tensor product representation of the coefficient tensor $\mathbf{\Xi} \in \mathbb{R}^{(d+1) \times (d+1) \times d}$ corresponding to the ODE \eqref{eq: chua ODE} is given by
\begin{equation*}
\begin{split}
 \mathbf{\Xi}~=~ &  
 \left \llbracket \begin{matrix}
  \begin{bmatrix}
   0 \\ -\alpha(1+\delta_1) \\ \alpha \\ 0
  \end{bmatrix} & 
  \begin{bmatrix}
   0 \\ -\alpha\delta_2 \\ 0 \\ 0
  \end{bmatrix} &
  \begin{bmatrix}
   0 \\ 1 \\ -1 \\ 1
  \end{bmatrix} &
  \begin{bmatrix}
   0 \\ 0 \\ -\beta \\ 0
  \end{bmatrix}
 \end{matrix} \right\rrbracket \otimes \left \llbracket \begin{matrix}
  e_1 & 0 & 0  \\
  e_2 & 0 & 0  \\
  0 & e_1 & 0  \\
  0 & 0 & e_1  
 \end{matrix} \right\rrbracket \otimes \left \llbracket \begin{matrix}
  \tilde{e}_1  \\
  \tilde{e}_2  \\
  \tilde{e}_3  
 \end{matrix} \right\rrbracket,
\end{split}
\end{equation*}
with $e_i \in \mathbb{R}^{4}$ and $\tilde{e}_j \in \mathbb{R}^3$ being the $i$th/$j$th unit vector of the standard basis in the respective space.

\subsection{Exact solution for the Fermi--Pasta--Ulam--Tsingou problem}\label{app: FPU}

For the chosen basis functions $\mathbb{D} = \{1 , x, x^2, x^3\}$ and the coordinate-major decomposition given in \eqref{eq: TT coordinate major}, an exact tensor product representation of the coefficient tensor $\mathbf{\Xi} \in \mathbb{R}^{4 \times \dots \times 4 \times d}$ corresponding to the ODE \eqref{eq: FPU} is given by
{ \allowdisplaybreaks
\begin{align*}
 \mathbf{\Xi}~=~ &
 \left \llbracket \begin{matrix}
  -2 e_2 - 2 \beta e_4 & e_1 + 3 \beta e_3 & -3 \beta e_2 & \beta e_1
 \end{matrix} \right\rrbracket \otimes \left \llbracket \begin{matrix}
  e_1 \\ e_2 \\ e_3 \\ e_4
 \end{matrix} \right\rrbracket \otimes
 e_1 \otimes \dots \otimes e_1 \otimes \tilde{e}_1 \\
 & + \sum_{k= 2}^{d-1} e_1 \otimes \dots \otimes e_1 \otimes 
 \left \llbracket \begin{matrix}
  e_1 & e_2 & e_3 & e_4
 \end{matrix} \right\rrbracket \\
 & \qquad \qquad  \otimes \underbrace{\left \llbracket \begin{matrix}
  -2 e_2 - 2 \beta e_4 & e_1 + 3 \beta e_3 & -3 \beta e_2 & \beta e_1 \\ 
  e_1 + 3 \beta e_3 & 0 & 0 & 0 \\
  -3 \beta e_2 & 0 & 0 & 0 \\
  \beta e_1 & 0 & 0 & 0
 \end{matrix} \right\rrbracket}_{k\textrm{th TT core}} \\
 & \qquad \qquad \otimes \left \llbracket \begin{matrix}
  e_1 \\ e_2 \\ e_3 \\ e_4
 \end{matrix} \right\rrbracket  \otimes e_1 \otimes \dots \otimes e_1 \otimes \tilde{e}_k \\
 & + e_1 \otimes \dots \otimes e_1 \otimes 
 \left \llbracket \begin{matrix}
  e_1 & e_2 & e_3 & e_4
 \end{matrix} \right\rrbracket \otimes 
 \left \llbracket \begin{matrix}
  -2 e_2 - 2 \beta e_4  \\
  e_1 + 3 \beta e_3  \\
  -3 \beta e_2  \\
  \beta e_1 
 \end{matrix} \right\rrbracket \otimes \tilde{e}_d,
\end{align*} }
with $e_i \in \mathbb{R}^4$ and $\tilde{e}_j \in \mathbb{R}^d$ being the $i$th/$j$th unit vector of the standard basis in the respective space.

\subsection{Exact solution for the Kuramoto model}\label{app: kuramoto}

For the chosen basis functions $\mathbb{D} = \{\sin(x), \cos(x)\}$ and the function-major decomposition given in \eqref{eq: kuramoto decomposition}, an exact tensor product representation of the coefficient tensor $\mathbf{\Xi} \in \mathbb{R}^{(d+1) \times (d+1) \times d}$ corresponding to the ODE \eqref{eq: kuramoto ODE} is given by
\begin{equation*}
\begin{split}
 \mathbf{\Xi}~=~& e_1 \otimes e_1 \otimes 
 \begin{bmatrix}
  \omega_1 \\ \vdots \\ \omega_d
 \end{bmatrix} + 
 h \cdot \sum_{k=1}^d e_{k+1} \otimes e_1 \otimes \tilde{e}_k \\[0.2cm]
 & + \frac{K}{d} \cdot \sum_{k=1}^d \left( \sum_{\substack{l=1\\l \neq k}}^d e_{k+1}\right) \otimes e_{k+1} \otimes \tilde{e}_k
 - \frac{K}{d} \cdot \sum_{k=1}^d  e_{k+1} \otimes \left( \sum_{\substack{l=1\\l \neq k}}^d e_{k+1}\right) \otimes \tilde{e}_k,
\end{split}
\end{equation*}
with $e_i \in \mathbb{R}^{d+1}$ and $\tilde{e}_j \in \mathbb{R}^d$ being the $i$th/$j$th unit vector of the standard basis in the respective space.

\subsection{Role of entropies, locality and correlations}\label{app: correlation}

Since the considerations in this work rely strongly on the TT format, we briefly note what type of correlation structure is required to arrive at an efficient TT approximation of a given vector, cf.~\cite{PhysRevB.73.094423,AreaReview}. Let us discuss to what extent this format can approximate a given unstructured vector $X \in \mathbb{R}^d$ (as they are arising in SINDy). It is known, see~\cite{PhysRevB.73.094423}, that for any vector $X$, there exists a tensor train $ \mathbf{X}$ with TT ranks $r_1,\dots, r_{d-1}=r$ ($r_0 = r_d =1$) for which the standard Euclidean vector norm satisfies
\begin{equation*}
    \| \mathbf{X}  - X\|^2 \leq 2  \sum_{l=0}^{d-1} \epsilon_l (r),
\end{equation*}
with
\begin{equation*}
    \epsilon_l (r) := \sum_{i=r+1}^{N_i} \mu_l^i,
\end{equation*}
where $\{\mu_l^i: i=1, \dots, n_i\}$ are the eigenvalues of the partial traces over indices labeled $l+1,\dots, d$ of the real symmetric matrices
\begin{equation*}
    R_l:= {\textrm{tr}}_{l+1,\dots, d}(X X^T).
\end{equation*}
In this sense, the global quality of the approximation of $X$ by a suitable tensor train $ \mathbf{X}$ can be related to thresholding errors. These errors are expected to be small when correlations are local and short-ranged. These correlations follow what is called an area law \cite{AreaReview}. The above bounds can then be brought into contact with entropic correlation measures, i.e.,
\begin{equation*}
    \log(\epsilon_l (r) ) \leq S^{1/2} (R_l) - \log(2r),
\end{equation*}
where $S^{1/2}(R_l) = 2 \log {\textrm{tr}} \left(R_l^{1/2}\right)$ denotes the $1/2$-Renyi entropy which appropriately captures correlations. If the correlations in the vectors are short-ranged in this
sense, an efficient TT approximation is possible.

\end{document}